\title{Dendroidal weak 2-categories}
\author{Andor Luk\'acs}
\address{Babes-Bolyai University\\
Faculty of Mathematics and Computer Science\\
Str. Mihail Kogalniceanu nr. 1\\
RO-400084 Cluj-Napoca, Romania}
\email{lukacs.andor@gmail.com}
\urladdr{}
\newtheorem{thm}{Theorem}[section]
\newtheorem{lem}[thm]{Lemma}
\newtheorem{cor}[thm]{Corollary}
\newtheorem{prop}[thm]{Proposition}
\newtheorem{conj}[thm]{Conjecture}
\theoremstyle{definition}
\newtheorem{dfn}[thm]{Definition}
\newtheorem{exa}[thm]{Example}
\theoremstyle{remark}
\newtheorem{rem}[thm]{Remark}
\DeclareMathOperator{\colim}{colim}
\DeclareMathOperator{\Id}{Id}
 \DeclareMathOperator{\edges}{Edg}
\DeclareMathOperator{\ccor}{Cor} \DeclareMathOperator{\vvert}{Vert}
\DeclareMathOperator{\leaves}{Leaves}
\DeclareMathOperator{\inn}{in}
\DeclareMathOperator{\intedges}{InEdg}
\DeclareMathOperator{\id}{id} 
 \DeclareMathOperator{\hcN}{hcN} \DeclareMathOperator{\Symm}{Symm}
\DeclareMathOperator{\Sk}{Sk} \DeclareMathOperator{\coSk}{coSk}
\renewcommand{\SS}{\mathbb{S}}
\newcommand{\rpd}{\Omega^\pi}
\numberwithin{equation}{section}
\numberwithin{figure}{section}
 \newcommand{\hcNd}{\hcN_d}
 \newcommand{\Op}{\mathcal{O}p}
\newcommand{\Ctg}{\mathcal{C}tg} \newcommand{\biCat}{bi\mathcal{C}tg}
\newcommand{\Cat}{\mathcal{C}at} \newcommand{\wCat}{w\mathcal{C}at}
\newcommand{\dSets}{d\mathcal{S}ets}
\newcommand{\Sets}{\mathcal{S}ets} \newcommand{\E}{\mathcal{E}}
 \newcommand{\N}{\mathbb{N}}
\newcommand{\To}{\longrightarrow} \newcommand{\op}{{\scriptsize{\textnormal
      {op}}}} 
\renewcommand{\C}{\mathcal{C}}
\begin{document}
\begin{abstract}
  We discuss the dendroidal notion of weak higher categories introduced by
  Moerdijk and Weiss in \cite{moerdijkweiss1} and we prove that dendroidal weak
  2-categories are equivalent to bicategories.
\end{abstract}

\maketitle

\section{Introduction}

Weakened notions of categories are central in many branches of mathematics.
One would often like to form certain ``categories'' where the composition of
arrows is not strictly associative, but only up to some coherent higher cells
that should be part of the structure.  Important examples of such
structures in the literature are homotopy $n$-types. Roughly speaking, a
homotopy $n$-type in topological spaces is the equivalence class of a space
$X$ such that all the homotopy groups $\pi_k(X)$ are trivial when $k>n$. These
classes are taken with respect to weak homotopy. Describing algebraic models
for homotopy $n$-types is a classical problem in algebraic topology. For
$n=2,3$ the first such models were given by Whitehead and Mac Lane
\cite{whitehead, whitemac}. Following the influence of Grothendieck and
R. Brown, who emphasized that groupoids should provide the natural framework
for homotopy types, higher categorical algebraic models of homotopy $3$-types
were given and studied by Leroy \cite{leroy}, Joyal and Tierney \cite{joyalt},
Berger \cite{berger}.

If we work out the topological intuitions coming from the interplay between
spaces, maps and homotopies between maps, we arrive to the abstract notion of
bicategories, first defined by B\'enabou in \cite{benabou}. Bicategories are
structures that consist of 0-cells (objects), 1-cells (arrows) and 2-cells;
1-cells are composable but not strictly associatively, the failure of this is
measured by some natural 2-cells. We can iterate the process to arrive to a
definition of tricategories and so on, the $n$-th step of this process would
give us a notion of weak $n$-categories. The problem we encounter is that each
step we take for defining a one-level higher notion increases radically the
complexity of the necessary coherence conditions between the higher
dimensional cells.  These steps also diminish the intuition on the nature of
the coherence axioms. As a result, there exist a plethora of different
definitions of weak $n$-categories in the literature. Comparing the different
notions of weak $n$-categories is one of the main problems in higher category
theory. One of the issues can be formulated as follows. Intuitively, the
right place to compare two notions of weak $n$-categories would be inside a
weak $n+1$-category, but how do we decide which notion of weak
$n+1$-categories to use for this comparison?

To deal with these problems, one can consider stricter- or non-iterative
approaches to define weak $n$-categories. The idea is that the resulting
stricter notions should be enough to deal with the applications on the one
hand, and the slogan is that ``{\em weak $n$-categories are strictifiable up
  to some extent}'' on the other. Examples of this approach include Baez and
Dolan's notion of $(\infty,n)$-categories, Tamsamani categories, etc. We would
like to mention explicitly one such example, originating in the observation
that the category of categories embeds to simplicial sets, via the nerve
functor:
\[
N\colon\Cat\To s\Sets.
\]
Certain simplicial sets which are not in the image of the nerve functor behave
much like categories, except that ``composition'' of arrows is well defined
only up to some higher degree terms in that simplicial set.  A. Joyal in
\cite{joyal} called these simplicial sets quasi-categories, although the
notion was already introduced by Boardman and Vogt under the name of
restricted Kan complex in \cite{boardmanvogt}. A quasi-category is an
$(\infty,1)$-category in Baez and Dolan's sense, i.e.  all the degree 2- or
higher cells are invertible. An important fact about quasi-categories is that
they are exactly the fibrant objects in the Joyal model structure for the
category of simplicial sets.

The starting point of our investigation is that, there exist a generalisation
of simplicial sets, that is suitable to study operads in the context of
homotopy theory. On the one hand, operads (or rather coloured operads) can be
viewed as generalisations of categories, where we consider arrows that can
have multiple inputs as opposed to one. It is natural to ask wether there
exists a presheaf category that extends the category of operads in the same
way as simplicial sets extend the category of categories via the nerve
functor. The question was studied in the papers of Moerdijk and Weiss
\cite{moerdijkweiss1, moerdijkweiss2}: the category of dendroidal sets
satisfies the requirements and fits in a commutative diagram of categories
\[
\xymatrix{\Cat\ar[r]^-N \ar@{>->}[d] &s\Sets\ar@{>->}[d]\\
  \Op\ar[r]^-{N_d} & d\Sets}
\]
Since dendroidal sets are an extension of simplicial sets, suitable for
studying the homotopy theory of operads, the theory of dendroidal sets
inherits a lot of questions from the theory of simplicial sets. For example,
this extension allows us to consider quasi-operads in the category of
dendroidal sets, i.e. analogs of Joyal's quasi-categories. One can then ask
whether the Joyal model structure on the category of simplicial sets extends
to that of dendroidal sets in such a way, that the fibrant objects of this
model category are exactly the quasi-operads. Cisinski and Moerdijk in
\cite{cisinski} gave a positive answer to this question.  One nice feature of
dendroidal sets, observed in \cite{moerdijkweiss1}, is that they contribute to
the theory of higher categories with a new compact definition of weak
$n$-categories.

The aim of this paper is to study the dendroidal definition of weak
$n$-categories mentioned above in low degree. We restrict ourselves to the
cases $n=1$ and $n=2$. In the case of degree $2$, the corresponding classical
notion is bicategories. We prove that dendroidal weak $2$-categories are
equivalent to bicategories (even more, they are isomorphic as categories). 

Our paper is organised as follows: 

In Section \ref{section:2} we introduce dendroidal sets, with emphasis on the
necessary notions and terminology that will be used in the next Sections. The
symmetric monoidal structure on the category of dendroidal sets that makes the
definition of dendroidal weak $n$-categories possible is induced by the
dendroidal nerve functor and the Boardman-Vogt tensor product of coloured
operads. The dendroidal Grothendieck construction gives us a way to
systematically glue together dendroidal sets, and is an important ingredient
that will allow us to consider later dendroidal weak $n$-categories with any
set of objects. The weakening of the higher dimensional cells in the
dendroidal setting is done with the homotopy coherent dendroidal nerve
functor, that is also introduced in this Section.  

In Section \ref{section:3} we define dendroidal weak $n$-categories and prove
that they are $3$-coskeletal for every $n$. This is an important property of
dendroidal weak $n$-categories, since it implies that degree $0$, $1$ and $2$
completely determines dendroidal weak $n$-categories. 

In Section \ref{section:4} first we describe dendroidal weak
$1$-categories. The iterative definition of dendroidal weak $n$-categories
makes it then possible to discuss dendroidal weak $2$-categories.  We prove
that the quasi-category of dendroidal weak 2-categories (denoted by
$i^*(\wCat^2)$) has equivalent homotopy category to the category of
bicategories:
\setcounter{thm}{4}\setcounter{section}{4}
\begin{thm}
  The category of unbiased bicategories $u\biCat$ and $ho(i^*(\wCat^2))$ are
  isomorphic. Hence the category of classical bicategories is equivalent to
  the category of dendroidal weak 2-categories.
\end{thm}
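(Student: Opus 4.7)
The plan is to exploit the 3-coskeletality of dendroidal weak $n$-categories (established in Section \ref{section:3}) in order to reduce a dendroidal weak 2-category to a finite amount of combinatorial data indexed by trees with at most three vertices, and then to match this data directly with the structure of an unbiased bicategory.

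First I would unpack what a dendroidal weak 2-category $X$ amounts to under the 3-coskeletal constraint. The values of $X$ on edges $\eta$ produce a set of objects; the values on the unary corolla give the 1-cells between them; the values on the $n$-ary corollas $C_n$ for $n\ge 0$ produce, via the homotopy coherent dendroidal nerve $\hcNd$, $n$-ary composition operations on 1-cells together with their comparison 2-cells. The 2-dimensional data coming from trees with two vertices produces the associator/unitor 2-cells that compare different ways of iterating these compositions, while the 3-dimensional data on trees with three vertices imposes the coherence relations among these comparisons. Coskeletality above dimension three then forces no further data or constraints.

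Next I would match this description with the definition of an unbiased bicategory, which has an $n$-ary composition $\otimes_n$ of 1-cells for every $n\ge 0$, together with coherence 2-isomorphisms relating an $\otimes_n$-composite of $\otimes_{k_i}$-composites to a single $\otimes_{\sum k_i}$-composite, subject to a generalised pentagon/triangle axiom. The operadic nature of trees makes this identification natural at the level of generating data: an $n$-vertex pasting of corollas corresponds exactly to an iterated unbiased composition, and the associator 2-cells correspond to those produced by $\hcNd$. I would then construct explicit functors in both directions and check that they are mutually inverse isomorphisms of categories; on morphisms one must verify that simplicial homotopies between maps of dendroidal weak 2-categories correspond precisely to invertible modifications/transformations on the bicategorical side, so that the passage to $ho(i^*(\wCat^2))$ is compatible with the natural notion of morphism in $u\biCat$.

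The main obstacle will be the careful bookkeeping of the coherence axioms: I need to check that every relation forced by the 3-coskeletal condition (every inner horn filling and every tree-substitution compatibility in dimension three) is equivalent to one of the coherence axioms in the unbiased definition, and conversely that the unbiased pentagon and triangle axioms are recovered from specific 3-trees, with degeneracies matching the unitors. Once this combinatorial dictionary is in place, the stated equivalence with classical bicategories follows from Leinster's known equivalence $u\biCat\simeq\biCat$.
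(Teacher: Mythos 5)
Your object-level strategy coincides with the paper's: Theorem \ref{gencoskel2} (3-coskeletality) reduces everything to trees with at most three vertices, and the unpacking of $(\wCat^2)_|$ in Subsection \ref{bicategories} matches the data of an unbiased bicategory exactly as you describe, with the final reduction to classical bicategories via Leinster's equivalence $u\biCat\simeq\biCat$. One structural point you gloss over: an object of $ho(i^*(\wCat^2))$ is not a dendroidal set $X$ valued on trees but a single dendrex of shape $|$ in the fixed dendroidal set $\wCat^2$, i.e.\ a pair $(A,x)$ with $x\colon N_d(As_A)\To\hcNd(\Ctg)$ coming from the Grothendieck construction; the set of objects and the admissible composition signatures enter through the operad $As_A$, not through evaluation of $X$ on edges. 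This is mostly a bookkeeping discrepancy, but it matters for the next point.

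The genuine gap is in your treatment of morphisms. You propose to check that ``simplicial homotopies between maps correspond to invertible modifications/transformations on the bicategorical side.'' If that were the right dictionary, the hom-sets of $ho(i^*(\wCat^2))$ would be homomorphisms modulo invertible transformation, which is a proper quotient of $u\biCat(\mathbb{A},\mathbb{B})$, and the claimed \emph{isomorphism} of categories would fail. What actually happens (Lemma \ref{wcat2lem} and Proposition \ref{bicatcomp1}) is quite different: a dendrex of shape $\ccor_1$ carries \emph{more} data than a homomorphism $(F,f)$, namely auxiliary ``mixed'' composition functors $\Psi_\sigma$ for signatures in $As_A\otimes\Omega(\ccor_1)$ whose labels alternate between the source and target bicategories, together with comparison isomorphisms. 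The analysis of the Boardman--Vogt tensor product $As_A\otimes\Omega(\ccor_1)$ shows each signature supports at most one operation, and the homotopy relation (witnessed by dendrices of shape $\ccor_1\circ\ccor_1$ with one degenerate face, whose degeneracy compatibilities force the relevant comparison cells to be identities) collapses precisely this auxiliary data: every dendrex is homotopic to a canonical $y^F$, and distinct homomorphisms remain distinct in the homotopy category. Without this analysis your argument does not establish the bijection on hom-sets, and with your proposed dictionary it would establish the wrong one.
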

\setcounter{thm}{0}\setcounter{section}{1}
The definition and basic properties of classical and unbiased bicategories are
recalled in
the Appendix.
\section{Dendroidal sets}\label{section:2}
\subsection{Terminology and basic facts about dendroidal
  sets}\label{termin:dsets}
Dendroidal sets generalise simplicial sets in a suitable way for studying the
homotopy theory of (coloured) operads and their algebras. They were introduced
in the papers of I. Moerdijk and I. Weiss \cite{moerdijkweiss1,
  moerdijkweiss2}. The idea behind the notion of dendroidal sets is that in
the same way as simplicial sets help us understanding categories via the nerve
functor, there should be an analogous notion for studying coloured operads as
a generalisation of categories. Our goal here is to write a self-contained
introduction to dendroidal sets, including all the terminology necessary for
the next Sections.

Let us start with the notion of trees. A {\em tree} is a finite non-planar
contractible graph with a distinguished leaf called {\em root}. A tree thus
has many planar representatives, when we draw a picture of it we actually pick
one. We will use the following terminology on trees: $\ccor_n$ denotes the
$n$-corolla, i.e. the tree with one vertex an $n+1$ leaves (one of these
leaves is the root), $\vvert(T)$ denotes the set of vertices of the tree $T$,
$\edges(T)$ denotes the set of edges of the tree $T$ and $\intedges(T)$
denotes the set of internal edges of the tree $T$. We will say that a vertex
$v\in\vvert(T)$ of a tree is an {\em outer vertex} if $v$ is adjacent to at
most one inner edge of $T$.  For example, on the following picture of a
(planar representative of a) tree $T$ we have  $\vvert(T) = \{u,v,w\}$,
$\edges(T)=\{a,b,c,d,e,f\}$, $\intedges(T)=\{c,b\}$. The vertices $u$ and $w$
are outer vertices.
 \[
  \xy<0.08cm,0cm>: 
  (40,10)*=0{}="5";
  (50,10)*=0{}="6"; 
  (60,10)*=0{}="7"; 
  (50,0)*=0{\bullet}="8";
  (70,0)*=0{\bullet}="9"; 
  (60,-10)*=0{\bullet}="10"; 
  (60,-20)*=0{}="11";
  (41,6)*{\scriptstyle d}; 
  (49,6)*{\scriptstyle e}; 
  (58,6)*{\scriptstyle f};
  (53,-5)*{\scriptstyle b}; 
  (67,-5)*{\scriptstyle c}; 
  (63,-10)*{\scriptstyle v}; 
  (58,-15)*{\scriptstyle a}; 
  (73,0)*{\scriptstyle u};
  (53,0)*{\scriptstyle w}; 
  "5";"8" **\dir{-}; 
  "6";"8" **\dir{-}; 
  "7";"8" **\dir{-}; 
  "8";"10" **\dir{-}; 
  "9";"10" **\dir{-}; 
  "10";"11" **\dir{-};
  \endxy
  \]

We will make frequent use of symmetric coloured operads (both in $\Sets$ and
enriched in a monoidal category $\E$) in the sense of \cite{leinster}  and we will refer to them as operads
from now on. Recall that if $P$ is an operad in $\Sets$, then it
comes equipped with a set of objects or colours $ob(P)$ and for each ordered
sequence of objects $\sigma=(e_1,\ldots,e_n;e)$, a set of operations
$P(e_1,\ldots,e_n;e)=P(\sigma)$. We will use the $\circ_i$-definition for the
composition of operations, i.e. if $\sigma$ is an ordered sequence or a {\em
  signature} as before and
$\rho=(f_1,\ldots,f_m;e_i)$ for a fixed $1\le i\le n$ then
\[
\sigma\circ_i\rho=(e_1,\ldots, e_{i-1},f_1,\ldots,f_m,e_{i+1},\ldots,e_n;e)
\]
and there is a given composition map
\[
\circ_i\colon P(\sigma)\times P(\rho)\To P(\sigma\circ_i\rho).
\]
The category of operads in $\Sets$ will be denoted by $\Op$, and the category
of planar- or non symmetric operads in $\Sets$ by $\Op^\pi$. Sometimes it will
be useful to construct operads from planar ones, via the free symmetrization
functor $\Symm\colon \Op^\pi\To \Op$, the left adjoint to the forgetful
functor $U\colon \Op\To\Op^\pi$.  This feature already appears in the
definition of dendroidal sets.

The category $\Omega^\pi$ consists of planar trees as objects and planar
operad maps as arrows. To be more precise, any planar tree $T$ gives rise to a
planar operad $\Omega^\pi(T)$. The objects of this non symmetric operad are
the edges of $T$, and the operations are freely generated by the vertices of
$T$, i.e. if $\sigma=(e_1,e_2,\ldots,e_n;e)$ is an ordered sequence of edges
of $T$ and there is a vertex $v$ with incoming edges $e_1,\ldots,e_n$ in this
order and outgoing edge $e$, then $\Omega^\pi(T)(\sigma)=\{v\}$. One can then
``compose'' vertices, indicated by the tree $T$. Hence a map $R\To T$ in
$\Omega^\pi$ is simply a planar operad map $\Omega^\pi(R)\To \Omega^\pi(T)$.
We observe that if $f\colon R\To T$ is an isomorphism, then the planar operad
structures imply that $R$ and $T$ have the same planar shape and they differ
only on the names of their edges and vertices. To avoid dealing with these
irrelevant isomorphisms, further on we will replace $\rpd$ by a skeleton of
it, and call this new category $\rpd$ as well. With this new convention, we
observe that all the maps of $\Omega^\pi$ are generated by two types, {\em
  faces} and {\em degeneracies}. These types of maps generalise the
corresponding notions in the category $\Delta$ defining simplicial sets, in
the following way.  Let $L_n$ denote the linear tree with $n$ vertices, $n\geq
0$:
\[
\xy<0.08cm,0cm>: (0,18)*=0{}="1"; (0,12)*=0{\bullet}="2";
(0,6)*=0{\bullet}="3"; (0,0)*=0{}="4"; (0,-1.7)*=0{\vdots}="5";
(0,-6)*=0{}="6"; (0,-12)*=0{\bullet}="7"; (0,-18)*=0{}="8"; "1";"2" **\dir{-};
"2";"3" **\dir{-}; "3";"4" **\dir{-}; "6";"7" **\dir{-}; "7";"8" **\dir{-};
\endxy
\]

If we consider the categorical definition of $\Delta$, we observe that the
category
\[ [n]=\xymatrix@1{0 &1\ar[l] &2\ar[l]&\cdots\ar[l]&n\ar[l]}
\] {\em is} in fact $[n]=\Omega(L_n)$, hence $\Delta$ is fully faithfully
embedded into $\rpd$ by $[n]\mapsto L_n$.

The face maps in $\Omega^\pi$ are all those monic operad maps $\partial\colon
\Omega^\pi(R)\To \Omega^\pi(T)$ which increase the number of vertices by one
(i.e.  $|\vvert(T)|=|\vvert(R)|+1$) and the degeneracies are all those epic
operad maps $\sigma\colon \Omega^\pi(T)\To \Omega^\pi(R)$ which decrease the
number of vertices by one.

It follows that face maps can be of the following types:
\begin{itemize*}
\item[(a)] the following picture is an example of an {\em outer face}
  \[
  \xy<0.08cm,0cm>: (-10,0)*=0{}="1"; (10,0)*=0{\bullet}="2";
  (0,-10)*=0{\bullet}="3"; (0,-20)*=0{}="4"; (40,10)*=0{}="5";
  (50,10)*=0{}="6"; (60,10)*=0{}="7"; (50,0)*=0{\bullet}="8";
  (70,0)*=0{\bullet}="9"; (60,-10)*=0{\bullet}="10"; (60,-20)*=0{}="11";
  (13,0)*{\scriptstyle u}; (-7,-5)*{\scriptstyle b}; (7,-5)*{\scriptstyle c};
  (3,-10)*{\scriptstyle v}; (-2,-15)*{\scriptstyle a}; (41,6)*{\scriptstyle
    d}; (49,6)*{\scriptstyle e}; (58,6)*{\scriptstyle f};
  (53,-5)*{\scriptstyle b}; (67,-5)*{\scriptstyle c}; (63,-10)*{\scriptstyle
    v}; (58,-15)*{\scriptstyle a}; (73,0)*{\scriptstyle u};
  (53,0)*{\scriptstyle w}; (30,-7)*{\partial_w}; "1";"3" **\dir{-}; "2";"3"
  **\dir{-}; "3";"4" **\dir{-}; "5";"8" **\dir{-}; "6";"8" **\dir{-}; "7";"8"
  **\dir{-}; "8";"10" **\dir{-}; "9";"10" **\dir{-}; "10";"11" **\dir{-};
  {\ar(20, -10)*{};(40,-10)*{}};
  \endxy
  \]
  which is just an inclusion of operads;
\item[(b)] the following picture is an example of an {\em inner face}
  \[
  \xy<0.08cm,0cm>: (-12,-4)*=0{}="t1"; (-6,1)*=0{}="t2"; (6,1)*=0{}="t3";
  (12,-4)*=0{}="t4"; (0,-10)*=0{\bullet}="t5"; (0,-20)*=0{}="t6";
  (40,10)*=0{}="5"; (50,10)*=0{}="6"; (60,10)*=0{}="7";
  (50,0)*=0{\bullet}="8"; (70,0)*=0{}="9"; (60,-10)*=0{\bullet}="10";
  (60,-20)*=0{}="11"; (-10,-7)*{\scriptstyle d}; (-6,-3)*{\scriptstyle e};
  (6,-3)*{\scriptstyle f}; (9,-7)*{\scriptstyle c}; (-2,-15)*{\scriptstyle a};
  (3,-10)*{\scriptstyle u}; (41,6)*{\scriptstyle d}; (49,6)*{\scriptstyle e};
  (58,6)*{\scriptstyle f}; (53,-5)*{\scriptstyle b}; (67,-5)*{\scriptstyle c};
  (63,-10)*{\scriptstyle v}; (58,-15)*{\scriptstyle a}; (53,0)*{\scriptstyle
    w}; (30,-7)*{\partial_b}; "t1";"t5" **\dir{-}; "t2";"t5" **\dir{-};
  "t3";"t5" **\dir{-}; "t4";"t5" **\dir{-}; "t5";"t6" **\dir{-}; "5";"8"
  **\dir{-}; "6";"8" **\dir{-}; "7";"8" **\dir{-}; "8";"10" **\dir{-};
  "9";"10" **\dir{-}; "10";"11" **\dir{-}; {\ar(20, -10)*{};(40,-10)*{}};
  \endxy
  \]
  where $\partial_b\colon \Omega^\pi(R)\To \Omega^\pi(T)$ is the identity on
  the objects (edges), and sends the operation $u\in \Omega^\pi(R)(d,e,f,c;a)$
  to the composite operation $v\circ_1 w\in \Omega^\pi(T)(d,e,f,c;a)$, which
  we can denote without ambiguity by $v\circ_b w$.
\end{itemize*}
Note that the seemingly special cases of face maps into the corolla $\ccor_n$,
$n\ge 2$ fall under case (a): these face maps are all the $n+1$ possible edge
inclusions of the trivial tree $|$ to $\ccor_n$.

On the other hand, a degeneracy always looks like
\[
\xy<0.06cm,0cm>: (0,40)*=0{}="1"; (10,40)*=0{}="2"; (20,40)*=0{}="3";
(10,30)*=0{\bullet}="4"; (20,20)*=0{\bullet}="5"; (30,10)*=0{\bullet}="6";
(40,20)*=0{\bullet}="7"; (30,30)*=0{}="8"; (50,30)*=0{}="9"; (30,0)*=0{}="10";
(80,30)*=0{}="11"; (90,30)*=0{}="12"; (100,30)*=0{}="13";
(90,20)*=0{\bullet}="14"; (105,10)*=0{\bullet}="15";
(120,20)*=0{\bullet}="16"; (110,30)*=0{}="17"; (130,30)*=0{}="18";
(105,0)*=0{}="19"; (12,24)*{\scriptstyle e}; (22,14)*{\scriptstyle f};
(23,20)*{\scriptstyle v}; (94,14)*{\scriptstyle e}; (65,13)*{\sigma_v};
"1";"4" **\dir{-}; "2";"4" **\dir{-}; "3";"4" **\dir{-}; "4";"5" **\dir{-};
"5";"6" **\dir{-}; "6";"7" **\dir{-}; "6";"10" **\dir{-}; "7";"8" **\dir{-};
"7";"9" **\dir{-}; "11";"14" **\dir{-}; "12";"14" **\dir{-}; "13";"14"
**\dir{-}; "14";"15" **\dir{-}; "15";"16" **\dir{-}; "16";"17" **\dir{-};
"16";"18" **\dir{-}; "15";"19" **\dir{-}; {\ar(52, 10)*{};(78,10)*{}};
\endxy
\]
where both of the objects $e,f$ are sent to $e$, the operation $v$ to the
identity operation $\id_e$ and $\sigma_v$ is the identity elsewhere.

We will use the following terminology with respect to faces and degeneracies:
\begin{itemize*}
\item If $e$ is an inner edge of a tree $T$, then $T/e$ denotes the tree
  resulting from $T$ by contracting $e$. The inner face corresponding to this
  contraction is usually denoted by $\partial_e\colon T/e\To T$.
\item If $v$ is an outer vertex of a tree $T$ (that is, it has exactly one
  inner edge adjacent to it), then $T/v$ denotes the tree resulting from $T$
  by removing $v$ and all the external edges adjacent to it (with the obvious
  choice for the root of $T/v$ when one of these external edges happens to be
  the root of $T$). We call this procedure ``cutting vertex $v$''. The outer
  face correspondig to cutting $v$ is usually denoted by $\partial_v\colon
  T/v\To T$.
\item If $v$ is a vertex of valence one of a tree $T$ then $T\backslash v$
  denotes the tree resulting from $T$ by removing $v$. The degeneracy
  corresponding to this removal is usually denoted by $\sigma_v\colon T\To
  T\backslash v$.
\end{itemize*}

The category $\Omega$ is obtained from $\Omega^\pi$ via the functor
$\Symm$. The objects of $\Omega$ are (non planar) trees and the arrows $R\To
T$ are operad maps $\Symm(\Omega(\bar R))\To \Symm(\Omega (\bar T))$, where
$\bar T$ denotes a planar representative of $T$. One can check that the
resulting operad does not depend on the chosen representatives, hence the
definition makes sense. Later on we will use this independence from chosen
representatives: we often describe the operad $\Omega(T)$ by picking a
representative $\bar T$ and giving only the description of the planar operad
$\Omega^\pi(\bar T)$.

The definition given above implies that there is an extra type of generator
for the maps in $\Omega$, namely the isomorphisms.

The category of dendroidal sets is the presheaf category on $\Omega$:
\[
\dSets:=\Sets^{\Omega^\op}=\textnormal{Funct}(\Omega^\op,\Sets).
\]
If $X$ is a dendroidal set, the elements of $X_T$ are called {\em dendrices of
  shape $T$}. The {\em representable dendroidal set} associated to a tree $T$
is the functor
\[
\Omega[T]:=\Omega(-,T)\colon\Omega^\op\To \Sets.
\]
By the Yoneda lemma, a dendrex $t\in X_T$ is the same thing as a map of
dendroidal sets $\Omega[T]\To X$. The Yoneda lemma in general is a very useful
tool in the theory of simplicial- and dendroidal sets, allowing us to swap
between maps and dendrices whenever needed. We are going to exploit this
property in the following without mentioning it. A first application of the
Yoneda lemma in the dendroidal setting proves that every dendroidal set is a
colimit of representable ones, a property generalising the well known fact for
simplicial sets.

For any given tree $T$ one can define some dendroidal subsets of the
representable $\Omega[T]$, like the boundary $\partial \Omega[T]$ or the inner
horn $\Lambda^e[T]$ with respect to the inner edge $e$. Dendroidal sets are
analogous to simplicial sets in many ways. For example, inner horns can be
used to define {\em inner Kan complexes} in the category of dendroidal sets:
we say that a dendroidal set $X$ {\em satisfies the inner Kan condition} if
for any inner horn $h\colon \Lambda^e[T]\To X$ there exists a dendrex $t\colon
\Omega[T]\To X$ such that the following diagram commutes:
\[
\xymatrix{ \Lambda^e[T]\ar[r]^-h\ar@{>->}[d] &X\\
  \Omega[T]\ar[ru]_-t }
\]
In this case $X$ is called an {\em inner Kan complex} or a {\em quasi-operad},
analogously to the simplicial case where an inner Kan complex was called by
A. Joyal a {\em quasi-category}.

Another notion that generalises from simplicial sets and categories to
dendroidal sets and operads is the nerve functor. The {\em dendroidal nerve}
$N_d\colon \Op\To \dSets$ can be defined by setting for any operad $P$
\[
\big(N_d(P)\big)_T:=\Op(\Omega(T),P).
\]
In the next few lines we introduce the notions of {\em $k$-skeleton} and {\em
  $k$-coskeleton} of a dendroidal set.  For every $k\in \N$ let $\Omega_k$
denote the full subcategory of $\Omega$ consisting of trees with at most $k$
vertices. The presheaf category on $\Omega_k$ is called the category of {\em
  $k$-truncated dendroidal sets} and is denoted by $\dSets_k$. The inclusion
$i_k\colon\Omega_k\To \Omega$ induces the truncation functor
$i_k^*\colon\dSets\To \dSets_k$ which has a left adjoint $(i_k)_!$ and a right
adjoint $(i_k)_*$. It follows that the composites
\[
(i_k)_!i_k^*,(i_k)_*i_k^*\colon \dSets \To \dSets
\]
form an adjoint pair of endofunctors. The left adjoint $(i_k)_!i_k^*$ is
usually denoted by $\Sk_k$ and is called the $k$-skeleton functor. The right
adjoint is denoted by $\coSk_k$ and is called the $k$-coskeleton functor.

A dendroidal set $X$ is said to be {\em $k$-coskeletal} if the canonical map
$X\To \coSk_k(X)$ is an isomorphism.  Another way to state this is that for
every dendroidal set $Y$ and every map of dendroidal sets $\phi\colon
\Sk_k(Y)\To X$ there exists a unique extension
\[
\xymatrix{\Sk_k(Y)\ar[r]^-\phi\ar@{>->}[d] & X\\ Y\ar@{.>}[ru]_-{\exists!}}
\]
Since any $Y\in\dSets$ is a colimit of representables, we can infer that if
the previous statement holds for all $Y=\Omega[T]$ where $T$ is any tree with
$k+1$ vertices, then it holds in general. In this case
$\Sk_k(Y)=\Sk_k(\Omega[T])=\partial \Omega[T]$. Note that in view of the
Yoneda lemma we can think of the composite
\[
\xymatrix{Sk_k(\Omega[T])\ar@{>->}[r]&\Omega[T]\ar[r]^-t& X}
\]
as the boundary- or $k$-skeleton of the dendrex $t$.  To emphasize this point
of view, sometimes we will denote this composite by $\Sk_k(t)$.

One can define the dual notion of $k$-skeletal dendroidal sets similarly.
\begin{rem}
  Note that the dendroidal definition of the functors $\Sk_k$ and $\coSk_k$
  uses a filtration of the objects of $\Omega$ by the number of the vertices
  of trees.  Later on, we will use the term {\em degree} to refer to this
  natural number.
\end{rem}

\subsection{A closed symmetric monoidal category structure on dendroidal sets}
Since $\dSets$ is a presheaf category, it can be endowed with the usual
cartesian closed category structure present in any presheaf category. There is
another interesting symmetric monoidal structure on $\dSets$ that will prove
to be useful in the definition of dendroidal weak $n$-categories of Section
\ref{section:3}. Our goal is to recall this monoidal structure in the current
section, together with those properties that will be used. For more details on
this subject one can consult \cite{moerdijkweiss1, moerdijkweiss2, weiss}.

One way to define the mentioned monoidal structure on $\dSets$ is by
transferring the Boardman-Vogt monoidal structure on $\Op$, via the dendroidal
nerve functor. We adopt this road, and we start by recalling the Boardman-Vogt
tensor product for symmetric operads (a generalisation of the B-V tensor
product for classical operads in \cite{boardmanvogt}).

Let $P, Q\in \Op$. We define a new operad, $P\otimes Q$, as follows. The set
of objects is $ob(P\otimes Q):=ob(P)\times ob(Q)$ and we denote the elements
of this set by $ a\otimes x:=(a,x)$. We describe the operations of $P\otimes
Q$ in terms of generators and relations. There are two types of generators:
\begin{itemize*}
\item[(a)] For any $p\in P(a_1,\ldots,a_n;a)$ and any $x\in ob(Q)$,
  \[
  p\otimes x\in P\otimes Q(a_1\otimes x,\ldots,a_n\otimes x;a\otimes x).
  \]
\item[(b)] For any $a\in ob(P)$ and any $q\in Q(x_1,\ldots,x_m;x)$,
  \[
  a\otimes q\in P\otimes Q(a\otimes x_1,\ldots,a\otimes x_m;a\otimes x).
  \]
\end{itemize*}
The relations also are of two types:
\begin{itemize*}
\item[(a)] Relations that imply precisely that the obvious maps
  \[
  \xymatrix@1{P\ar[r]^-{\id\otimes x} &P\otimes Q} \text{ and }
  \xymatrix@1{Q\ar[r]^-{a\otimes \id} &P\otimes Q}
  \]
  are maps of operads for any fixed $x\in ob(P), a\in ob(Q)$.
\item[(b)] For any $p\in P(a_1,\ldots,a_n;a)$ and $q\in Q(x_1,\ldots,x_m;x)$
  the following two operations are the same in $P\otimes Q$
  \[
  \xy<0.08cm,0cm>: (0,0)*{ \xy<0.08cm,0cm>: (0,0)*=0{}="1";
    (0,10)*{\circ}="2"; (-15,20)*=0{\bullet}="3"; (15,20)*=0{\bullet}="4";
    (-25,30)*=0{}="5"; (-5,30)*=0{}="6"; (5,30)*=0{}="7"; (25,30)*=0{}="8";
    "1";"2" **\dir{-}; "2";"3" **\dir{-}; "2";"4" **\dir{-}; "3";"5"
    **\dir{-}; "3";"6" **\dir{-}; "4";"7" **\dir{-}; "4";"8" **\dir{-};
    (4,4)*{\scriptstyle a\otimes x}; (-4,9)*{\scriptstyle p\otimes x};
    (-12,14)*{\scriptstyle a_1\otimes x}; (0,14)*{ \ldots};
    (12,14)*{\scriptstyle a_n\otimes x}; (-9,20)*{\scriptstyle a_1\otimes q};
    (9,20)*{\scriptstyle a_n\otimes q}; (-25,25)*{\scriptstyle a_1\otimes
      x_1}; (-15,25)*{\ldots}; (-4,25)*{\scriptstyle a_1\otimes x_m};
    (15,25)*{\ldots}; (27,25)*{\scriptstyle a_n\otimes x_m};
    \endxy
  }; (70,0)*{ \xy<0.08cm,0cm>: (0,0)*=0{}="1"; (0,10)*=0{\bullet}="2";
    (-15,20)*{\circ}="3"; (15,20)*{\circ}="4"; (-25,30)*=0{}="5";
    (-5,30)*=0{}="6"; (5,30)*=0{}="7"; (25,30)*=0{}="8"; "1";"2" **\dir{-};
    "2";"3" **\dir{-}; "2";"4" **\dir{-}; "3";"5" **\dir{-}; "3";"6"
    **\dir{-}; "4";"7" **\dir{-}; "4";"8" **\dir{-}; (4,4)*{\scriptstyle
      a\otimes x}; (-4,9)*{\scriptstyle a\otimes q}; (-12,14)*{\scriptstyle
      a\otimes x_1}; (0,14)*{ \ldots}; (12,14)*{\scriptstyle a\otimes x_m};
    (-9,20)*{\scriptstyle p\otimes x_1}; (9,20)*{\scriptstyle p\otimes x_m};
    (-25,25)*{\scriptstyle a_1\otimes x_1}; (-15,25)*{\ldots};
    (-4,25)*{\scriptstyle a_n\otimes x_1}; (15,25)*{\ldots};
    (27,25)*{\scriptstyle a_n\otimes x_m};
    \endxy
  }; (35,-5)*{=}; (90,-5)*{\sigma_{n,m}};
  \endxy
  \]
  where $\sigma_{n,m}\in \Sigma_{n\cdot m}$ denotes the permutation that makes
  the order of the inputs on the right-hand side of the equation the same as
  the order of the inputs on the left-hand side.
\end{itemize*}
The tensor product we defined is a bifunctor $-\otimes -\colon\Op\times \Op\To
\Op$ and it induces a symmetric closed monoidal category structure on
$\Op$. The right adjoint of any functor $-\otimes Q$ is denoted by
$\underline{\Op}(Q,-)\colon \Op\To \Op$. In particular, $\underline
{\Op}(Q,\Sets)$ is {\em the operad of $Q$-algebras}. (For the definition of
$\underline {\Op}(Q,-)$ see \cite{weiss}.)

We can now make use of the functor $ N_d\colon \Op\To \dSets$ to transfer the
Boardman-Vogt tensor product to dendroidal sets:
\begin{itemize*}
\item[--] For any two representable dendroidal sets $\Omega[T]$ and
  $\Omega[R]$, define
  \[
  \Omega[T]\otimes \Omega[R]:= N_d(\Omega(T)\otimes \Omega(R)).
  \]
\item[--] Extend the definition cocontinuously, i.e. for any $X,Y\in \dSets$
  write $X=\colim_T\Omega[T]$, $Y=\colim_R\Omega[R]$ as colimits of
  representables and define
  \[
  X\otimes Y:=\colim_{T,R}\Omega[T]\otimes \Omega[R].
  \]
\end{itemize*}
The bifunctor $-\otimes-\colon \dSets\times\dSets\To \dSets$ induces a
symmetric closed monoidal structure on $\dSets$, the right adjoint of
$-\otimes Y$ is the functor
\[
\underline{\dSets}(Y,-)\colon \dSets\To \dSets,
\]
given on objects (by Yoneda lemma) by
\[
\underline{\dSets}(Y,Z)_T=\dSets(Y\otimes \Omega[T],Z).
\]
The following properties will prove to be useful in Section \ref{section:4}:
\begin{prop}(Lemma 4.3.3 in \cite{weiss}) For any operad $P\in \Op$ and for
  any tree $T\in \Omega$
  \[
  N_d(P)\otimes \Omega[T]\simeq N_d(P\otimes \Omega(T)).
  \]
\end{prop}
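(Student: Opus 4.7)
The approach is to use the adjunction $\tau_d\dashv N_d$ between $\dSets$ and $\Op$ (where $\tau_d$ is the left adjoint sending $\Omega[T]$ to $\Omega(T)$) together with the cocontinuous definition of $\otimes$ on $\dSets$. First I would show that $\tau_d$ is strong monoidal: being a left adjoint it is cocontinuous, $\otimes$ is cocontinuous in each variable on both sides, so it suffices to check strong monoidality on representables. There, the defining formula $\Omega[T]\otimes\Omega[R]:=N_d(\Omega(T)\otimes\Omega(R))$ combined with the fact that $N_d$ is fully faithful (so the counit $\tau_d N_d\To\id$ is an isomorphism) gives $\tau_d(\Omega[T]\otimes\Omega[R])\simeq\Omega(T)\otimes\Omega(R)$. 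By the standard yoga of adjunctions, $N_d$ then inherits a canonical lax monoidal structure, which, specialized to the case at hand and using $\Omega[T]=N_d(\Omega(T))$, provides a natural comparison map $N_d(P)\otimes\Omega[T]\To N_d(P\otimes\Omega(T))$.

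Next I would rewrite both sides as colimits indexed by the category of elements of $N_d(P)$. Since every dendroidal set is a colimit of its representables, $N_d(P)\simeq\colim_{(R,\alpha)}\Omega[R]$, and cocontinuity of $-\otimes\Omega[T]$ yields
\[
N_d(P)\otimes\Omega[T]\simeq\colim_{(R,\alpha)}N_d(\Omega(R)\otimes\Omega(T)).
\]
On the other hand, applying $\tau_d$ to the same colimit gives $P\simeq\tau_d N_d(P)\simeq\colim_{(R,\alpha)}\Omega(R)$, so cocontinuity of $-\otimes\Omega(T)$ on $\Op$ yields $P\otimes\Omega(T)\simeq\colim_{(R,\alpha)}\Omega(R)\otimes\Omega(T)$. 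The comparison map is thereby identified with the canonical map
\[
\colim_{(R,\alpha)}N_d(\Omega(R)\otimes\Omega(T))\To N_d\bigl(\colim_{(R,\alpha)}\Omega(R)\otimes\Omega(T)\bigr),
\]
and I would prove it is an isomorphism by evaluating on $S$-dendrices for every tree $S\in\Omega$, thereby reducing the claim to a bijection of sets
\[
\colim_{(R,\alpha)}\Op(\Omega(S),\Omega(R)\otimes\Omega(T))\To\Op(\Omega(S),P\otimes\Omega(T)).
\]

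The hard step is exhibiting this last map as a bijection. For surjectivity, given an operad map $\phi\colon\Omega(S)\to P\otimes\Omega(T)$, I would use the explicit generators-and-relations presentation of $P\otimes\Omega(T)$ recalled above: only finitely many generators $p\otimes x$ (with $p\in P$) and $a\otimes v$ (with $v$ a vertex of $T$) are needed to express the image of $\phi$, and the operations of $P$ occurring there can be packaged into vertices of a suitable tree $R$ together with a map $\alpha\colon\Omega(R)\to P$, through which $\phi$ factors as $(\alpha\otimes\id)\circ\psi$ for a canonical $\psi\colon\Omega(S)\to\Omega(R)\otimes\Omega(T)$. Injectivity amounts to checking that any two such factorizations admit a common refinement in the category of elements of $N_d(P)$. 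The main obstacle is precisely this combinatorial bookkeeping: one has to extract a single tree-shaped dendrex of $N_d(P)$ controlling the whole image of $\phi$, using the relations (b) of the Boardman--Vogt product to normalize the composites of $p\otimes x$'s and $a\otimes v$'s into the required shuffle form.
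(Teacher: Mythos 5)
The paper gives no proof of this statement at all: it is quoted verbatim as Lemma 4.3.3 of \cite{weiss}, so there is no internal argument to compare yours against. Judged on its own, your reduction is the right one and matches the standard route: since colimits of dendroidal sets are computed pointwise and $-\otimes\Omega[T]$ is cocontinuous, the claim is exactly the assertion that for every tree $S$ the canonical map
\[
\colim_{(R,\alpha)}\Op\bigl(\Omega(S),\Omega(R)\otimes\Omega(T)\bigr)\To\Op\bigl(\Omega(S),P\otimes\Omega(T)\bigr)
\]
is a bijection, where the colimit runs over the category of elements of $N_d(P)$. The formal scaffolding (strong monoidality of $\tau_d$ on representables, the induced lax structure on $N_d$, the identification $P\simeq\colim\Omega(R)$) is all correct.

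The gap is that the bijection itself, which is the entire mathematical content of the lemma, is only announced, not proved. Two points deserve explicit warning. First, the category of elements of $N_d(P)$ is not filtered, so no general ``$\Omega(S)$ is compact'' argument is available; you must genuinely use the normal form for operations of $P\otimes\Omega(T)$ forced by the interchange relation (b). Second, for surjectivity it is not automatic that the finitely many $P$-operations occurring in the image of $\phi\colon\Omega(S)\To P\otimes\Omega(T)$ assemble into a \emph{single} dendrex $(R,\alpha)$: the same colour or operation of $P$ may be needed several times in incompatible positions, so $R$ must be built with repeated edges, and one must check that the resulting $\psi\colon\Omega(S)\To\Omega(R)\otimes\Omega(T)$ is actually a map of operads (compatibility with the composition in $\Omega(S)$, i.e.\ with the tree structure of $S$, is where the shuffle normal form is used). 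For injectivity, ``common refinement'' should be replaced by exhibiting an explicit zig-zag in the category of elements connecting two factorizations; in a non-filtered index category equality in the colimit is witnessed only by such zig-zags, and producing them is again a combinatorial argument about shuffles of $S$ over $R\otimes T$. Until these steps are carried out, the proposal is a correct plan rather than a proof.
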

\begin{prop}(Corollary 9.3 in \cite{moerdijkweiss2}) For all operads
  $P,Q\in\Op$
  \[
  \underline{\dSets} (N_d(P),N_d(Q))\simeq N_d(\underline{\Op}(P,Q)).
  \]
\end{prop}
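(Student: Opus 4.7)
The plan is to verify the asserted isomorphism of dendroidal sets section by section using the Yoneda lemma: for each tree $T \in \Omega$, I would produce a natural bijection between the $T$-sections of the two sides and then argue that these bijections are compatible with the operad maps $R \To T$.

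By definition of the internal hom on $\dSets$, the $T$-section of the left-hand side is
\[
\underline{\dSets}(N_d(P), N_d(Q))_T = \dSets(N_d(P) \otimes \Omega[T], N_d(Q)).
\]
The preceding proposition rewrites $N_d(P) \otimes \Omega[T]$ as $N_d(P \otimes \Omega(T))$. Next I would invoke that the dendroidal nerve $N_d \colon \Op \To \dSets$ is fully faithful, which is standard: an operad can be reconstructed from its nerve since the colour set is the value of $N_d(P)$ on the trivial tree, the operations of each signature appear on the relevant corolla $\ccor_n$, and the $\circ_i$-composition is encoded by inner face maps. Hence $\dSets(N_d(P \otimes \Omega(T)), N_d(Q)) \simeq \Op(P \otimes \Omega(T), Q)$. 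To close the chain I would use the symmetric closed monoidal structure on $\Op$: combining the symmetry $P \otimes \Omega(T) \simeq \Omega(T) \otimes P$ with the adjunction $-\otimes P \dashv \underline{\Op}(P,-)$ gives
\[
\Op(P \otimes \Omega(T), Q) \simeq \Op(\Omega(T), \underline{\Op}(P, Q)) = N_d(\underline{\Op}(P, Q))_T,
\]
which is exactly the $T$-section of the right-hand side.

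The main obstacle I anticipate is naturality in $T$: each of the four identifications above must be compatible with precomposition by an arrow $R \To T$ in $\Omega$ so that the pointwise bijections assemble into a genuine isomorphism of dendroidal sets, rather than a mere family of bijections on sections. This reduces to checking naturality of the comparison map in the preceding proposition, of the two tensor-hom adjunction bijections in $\Op$, and of the fully faithfulness of $N_d$; all of these are routine but slightly delicate, since the monoidal structure on $\dSets$ was defined by colimit extension from representables, and one must ensure that the whole reorganisation commutes with that extension.
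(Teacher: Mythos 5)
The paper offers no proof of this proposition---it is quoted verbatim as Corollary 9.3 of Moerdijk--Weiss---but your argument is correct and is essentially the standard one from that source: compute $T$-sections via $\underline{\dSets}(N_d(P),N_d(Q))_T=\dSets(N_d(P)\otimes\Omega[T],N_d(Q))$, apply the compatibility $N_d(P)\otimes\Omega[T]\simeq N_d(P\otimes\Omega(T))$, use full faithfulness of $N_d$ and the tensor--hom adjunction in $\Op$ to land on $\Op(\Omega(T),\underline{\Op}(P,Q))=N_d(\underline{\Op}(P,Q))_T$, and check naturality in $T$. Your identification of naturality (and of the full faithfulness of $N_d$) as the points requiring care is exactly right.
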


\subsection{The dendroidal Grothendieck construction}
The aim of this Section is to provide an ingredient we are going to use in the
description of dendroidal weak higher categories. The data we start with is a
functor $X\colon \SS^\op\To \dSets$ where $\SS$ is a cartesian category, and
we are going to assign to $X$ a new dendroidal set $\int_\SS X$, called {\em
  the Grothendieck construction of $X$}.

To achieve this goal, we need some preliminary definitions. Since $\SS$ is
cartesian it is an operad, hence it makes sense to talk about the dendroidal
set $N_d(\SS)$. Suppose that for a fixed tree $T$, $t\in N_d(\SS)$ is a
dendrex of shape $T$. That is, $t$ intuitively looks like the tree $T$
decorated with objects and operations of the operad $\SS$:
\[
\xy<0.08cm,0cm>: (0,0)*=0{}="1"; (10,0)*=0{}="2"; (20,0)*=0{}="3";
(30,0)*=0{}="4"; (50,0)*=0{}="5"; (10,-10)*=0{\bullet}="6";
(25,-10)*=0{\bullet}="7"; (40,-10)*=0{\bullet}="8"; (25,-20)*=0{\bullet}="9";
(25,-30)*=0{}="10"; (3,-5)*{\scriptstyle s_4}; (12,-5)*{\scriptstyle s_5};
(18,-5)*{\scriptstyle s_6}; (32,-5)*{\scriptstyle s_7}; (48,-5)*{\scriptstyle
  s_8}; (7,-11)*{\scriptstyle u_2}; (22,-11)*{\scriptstyle u_3};
(43,-11)*{\scriptstyle u_4}; (15,-15)*{\scriptstyle s_1};
(27,-15)*{\scriptstyle s_2}; (36,-15)*{\scriptstyle s_3};
(28,-21)*{\scriptstyle u_1}; (23,-25)*{\scriptstyle s_0}; "1";"6" **\dir{-};
"2";"6" **\dir{-}; "3";"6" **\dir{-}; "4";"8" **\dir{-}; "5";"8" **\dir{-};
"6";"9" **\dir{-}; "7";"9" **\dir{-}; "8";"9" **\dir{-}; "9";"10" **\dir{-};
\endxy
\]
where the $s_i$ are objects of $\SS$, and -- for example -- $u_1\colon
s_1\times s_2\times s_3\To s_0$ is a map in $\SS$. To such a $t$ we can assign
an object of $\SS$, called $\inn(t)$, which is the cartesian product of the
objects labeling the leaves of $T$: since $t\in \Op(\Omega(T),\SS)$,
\[
\inn(t):=\prod_{l\in\leaves(T)} t(l).
\]
Furthermore, we can assign to a $t\in N_d(\SS)_T$ and a map $\alpha\colon R\To
T$ of $\Omega$ a map in $\SS$
\[
\inn(\alpha)\colon \inn(t)\To \inn(\alpha^*t)
\]
by first composing the maps of $\SS$, indicated by $t$ and $\alpha$, and then
taking the product. For example, if $\alpha\colon R\To T$ is the inclusion to
the root vertex (in this case a composite of three outer faces)
\[
\xy (0,0)*{ \xy<0.06cm,0cm>: (10,-10)*=0{}="6"; (25,-10)*=0{}="7";
  (40,-10)*=0{}="8"; (25,-20)*=0{\bullet}="9"; (25,-30)*=0{}="10"; "6";"9"
  **\dir{-}; "7";"9" **\dir{-}; "8";"9" **\dir{-}; "9";"10" **\dir{-};
  \endxy
}; (60,0)*{ \xy<0.06cm,0cm>: (0,0)*=0{}="1"; (10,0)*=0{}="2"; (20,0)*=0{}="3";
  (30,0)*=0{}="4"; (50,0)*=0{}="5"; (10,-10)*=0{\bullet}="6";
  (25,-10)*=0{\bullet}="7"; (40,-10)*=0{\bullet}="8";
  (25,-20)*=0{\bullet}="9"; (25,-30)*=0{}="10"; "1";"6" **\dir{-}; "2";"6"
  **\dir{-}; "3";"6" **\dir{-}; "4";"8" **\dir{-}; "5";"8" **\dir{-}; "6";"9"
  **\dir{-}; "7";"9" **\dir{-}; "8";"9" **\dir{-}; "9";"10" **\dir{-};
  \endxy
}; {\ar (20,-4)*{}; (40,-4)*{}}; (30,-2)*{\alpha};
\endxy
\]
and $t$ is as above, then $\alpha^*t$ is
\[
\xy<0.08cm,0cm>: (10,0)*=0{}="6"; (25,0)*=0{}="7"; (40,0)*=0{}="8";
(25,-10)*=0{\bullet}="9"; (25,-20)*=0{}="10"; (15,-5)*{\scriptstyle s_1};
(27,-5)*{\scriptstyle s_2}; (36,-5)*{\scriptstyle s_3}; (28,-11)*{\scriptstyle
  u_1}; (23,-15)*{\scriptstyle s_0}; "6";"9" **\dir{-}; "7";"9" **\dir{-};
"8";"9" **\dir{-}; "9";"10" **\dir{-};
\endxy
\]
and $\inn(\alpha)=u_2\times u_3\times u_4$. In particular, if $\alpha$ is an
inner face or a degeneracy then $\inn(\alpha)$ is the identity map of
$\inn(t)=\inn(\alpha^*t)$, and if $\xymatrix@1{R'\ar[r]^-\beta
  &R\ar[r]^-\alpha &T}$ are maps of $\Omega$ then
$\inn(\alpha\beta)=\inn(\beta)\inn(\alpha)$.

In view of the definitions above we can define $\int_\SS X$ as follows. The
set $\left(\int_\SS X\right)_T$ consists of pairs $(t,x)$ where $t\in
N_d(\SS)_T$ and
\[
x\colon \Omega[T]\To \coprod_{s\in\SS} X(s)
\]
is a degree preserving map such that $x(r)\in X(\inn(r^*t))$ for any
$r\in\Omega[T]_R$. There is one more condition on $x$: it has to be compatible
with the dendroidal structure of the various dendroidal sets
involved. Explicitly, for a chain of arrows $\xymatrix@1{R'\ar[r]^-\alpha
  &R\ar[r]^- r&T}$ in $\Omega$ we have $r\in \Omega[T]_R$ and
$\alpha^*r=r\alpha\in\Omega[T]_{R'}$, hence
\[
x(r)\in X\big(\inn(r^*t)\big)_R \qquad\textnormal{and}\qquad x(\alpha^*r)\in
X\big(\inn((r\alpha)^*t)\big)_{R'}.
\]
The data above also induces two maps
\[
\xymatrix{X\big(\inn(r^*t)\big)_R\ar[rd]_-{\alpha^*} &&X\big(\inn((r\alpha)^*t)\big)_{R'}\ar[ld]^-{X(\inn\alpha)}\\
  &X\big(\inn(r^*t)\big)_{R'}}
\]
We require
\begin{equation}\label{groth1}
  \alpha^*\big(x(r)\big)=X(\inn\alpha)\big(x(\alpha^* r)\big).
\end{equation}
The dendroidal structure on $\int_\SS X$ is defined as follows. Suppose that
$\delta\colon R\To T$ is a map in $\Omega$ and $(t,x)\in \big(\int_\SS
X\big)_T$ a dendrex of shape $T$. The map $\delta$ induces the map of
dendroidal sets $\Omega[\delta]\colon\Omega[R]\To \Omega[T]$. We define
\begin{equation}\label{groth2}
  \delta^*(t,x):=\big(\delta^* t, x\circ\Omega[\delta]\big).
\end{equation}

One can check that with this structure $\int_\SS X$ is indeed a dendroidal
set. The following theorem and proposition collect two important properties of
the dendroidal Grothendieck construction.
\begin{thm}\label{ittay2}( \cite{moerdijkweiss1,weiss})
  Let $X\colon \SS^\op\To \dSets$ be a diagram of dendroidal sets. If for all
  $s\in\SS$ every $X(s)$ is an inner Kan complex then so is $\int_\SS X$.
  \end{thm}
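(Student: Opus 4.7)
The plan is to show that every inner horn $h\colon \Lambda^e[T] \to \int_\SS X$ admits a filler. I proceed in three stages: fill the base component in $N_d(\SS)$, reassemble the remaining horn data inside the single fiber $X(\inn(t))$, and apply the inner Kan hypothesis on that fiber.

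First, project $h$ along the forgetful map $\int_\SS X \to N_d(\SS)$, $(t,x)\mapsto t$, which is a map of dendroidal sets by (\ref{groth2}). Since $\SS$ is cartesian (hence an operad) and dendroidal nerves of operads are strict inner Kan complexes---their inner horns fill uniquely---the projected horn has a unique extension to a dendrex $t \in N_d(\SS)_T$. This fixes the $t$-component of the sought filler. With $t$ in hand, assemble the $x$-data into a single map $\bar h\colon \Lambda^e[T] \to X(\inn(t))$ by setting
\[
\bar h(r) := X\bigl(\inn(r)\bigr)\bigl(x(r)\bigr) \in X(\inn(t))_R
\]
for every $r\colon R\to T$ in $\Lambda^e[T]_R$. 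Using the compatibility (\ref{groth1}) together with the contravariant functoriality $\inn(r\alpha) = \inn(\alpha)\inn(r)$ and the fact that each $X(\inn(r))$ is a dendroidal map, one verifies that $\bar h$ is natural in $r$, hence a bona fide map of dendroidal sets. Because $X(\inn(t))$ is inner Kan by hypothesis, $\bar h$ extends to some $\tilde h\colon \Omega[T] \to X(\inn(t))$.

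Now read off the extension of $x$. The crucial observation is that for inner faces and degeneracies $\inn$ is the identity; in particular $\inn(\partial_e) = \id_{\inn(t)}$, so $X(\inn(\partial_e^* t)) = X(\inn(t))$. Define $x(\id_T) := \tilde h(\id_T)$ and $x(\partial_e) := \tilde h(\partial_e)$, leaving $x$ unchanged on $\Lambda^e[T]$. Every dendrex of $\Omega[T]$ either lies in $\Lambda^e[T]$ or, up to degeneracies, coincides with $\id_T$ or $\partial_e$, so this data exhausts $\Omega[T]$. Verification of (\ref{groth1}) for the extended family reduces to the identities $\alpha^*\tilde h(r) = \tilde h(r\alpha)$ (since $\tilde h$ is a dendroidal map) together with $\inn(\partial_e) = \id$, producing the desired filler $(t,x) \in (\int_\SS X)_T$.

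The main technical hurdle is verifying the dendroidal naturality of $\bar h$, namely that $\alpha^*\bar h(r) = \bar h(r\alpha)$ for every $\alpha\colon R'\to R$. This combines three ingredients---the compatibility (\ref{groth1}) applied to $x(r)$, the intertwining $\alpha^*\circ X(\inn r) = X(\inn r)\circ\alpha^*$, and the contravariant composition $\inn(r\alpha) = \inn(\alpha)\inn(r)$---which together produce the equality. Once $\bar h$ is seen to be a dendroidal map, the rest is bookkeeping around the observation that inner faces carry trivial $\inn$, so no fiber-translation is needed when reading $\tilde h$ back as values of $x$.
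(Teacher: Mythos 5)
Your argument is correct, and it follows the standard route (the paper itself does not reprove this statement but cites \cite{moerdijkweiss1,weiss}; your proof matches the one given there in substance): fill the projected horn in $N_d(\SS)$ using the strict inner Kan property of nerves of operads, transport all fibre data into the single fibre $X(\inn(t))$ via the maps $X(\inn r)$, fill there, and read the result back using that $\inn$ is the identity on inner faces. The naturality computation for $\bar h$ is exactly the combination of (\ref{groth1}), the fact that $X(\inn r)$ is a dendroidal map, and $\inn(r\alpha)=\inn(\alpha)\inn(r)$, as you say. Two cosmetic points: a horn into $\int_\SS X$ assigns to each $r$ a pair $(t_r,x_r)$, so your ``$x(r)$'' should be read as $x_r(\id_R)$ (harmless, by the compatibility (\ref{groth2})); and the dendrices of $\Omega[T]$ outside $\Lambda^e[T]$ are, up to degeneracies \emph{and automorphisms of $T$ and of $T/e$}, exactly $\id_T$ and $\partial_e$ --- the automorphisms are forced by naturality, so nothing changes, but they should be mentioned since $\Omega[T]_T=\Aut(T)$ need not be trivial.
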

\begin{prop}\label{grothcosk}
  Let $X\colon \SS^\op\To \dSets$ be a diagram of dendroidal sets and $k\ge 2$
  a natural number. If $X(s)$ is $k$-coskeletal for every $s\in \SS$ then so
  is $\int_\SS X$.
\end{prop}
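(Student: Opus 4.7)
My plan is to verify the defining extension property: for every tree $T$ with exactly $k+1 \geq 3$ vertices and every map $\phi\colon \partial\Omega[T]\to\int_\SS X$, I must construct a unique extension $\Omega[T]\to\int_\SS X$. Since a dendrex of $\int_\SS X$ at $T$ is a pair $(t,x)$, I split the extension problem along the canonical projection $p\colon \int_\SS X\to N_d(\SS)$, $(t,x)\mapsto t$, and solve the two components independently.

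For the $t$-component, I invoke the standard fact that the dendroidal nerve of an operad is $2$-coskeletal: a $T$-dendrex of $N_d(\SS)$ is an operad map $\Omega(T)\to\SS$, determined by its action on edges (objects) and vertices (operations) together with the composition relations at pairs of adjacent vertices --- all of which already live in $\Sk_2\Omega[T]\subseteq\partial\Omega[T]$ since $k+1\ge 3$. The composite $p\phi$ therefore extends uniquely to a dendrex $t\in N_d(\SS)_T$.

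For the $x$-component, I push the boundary data forward into the single dendroidal set $X(\inn(t))$. Writing $\phi(r)=(r^*t,x_r)$ for $r\colon R\to T$ in $\partial\Omega[T]_R$, and using the map $\inn(r)\colon \inn(t)\to\inn(r^*t)$ in $\SS$ together with the contravariance of $X$, I set
\[
\tilde x(r) := X(\inn r)\bigl(x_r(\id_R)\bigr)\in X(\inn(t))_R.
\]
Unravelling the compatibility (\ref{groth1}) for each $\phi(r)$ together with the functorial identity $\inn(r\alpha)=\inn(\alpha)\,\inn(r)$ shows that $\tilde x$ commutes with face and degeneracy operators, hence defines a dendroidal map $\tilde x\colon \partial\Omega[T]\to X(\inn(t))$. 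By hypothesis $X(\inn(t))$ is $k$-coskeletal, so $\tilde x$ extends uniquely to $\tilde x\colon \Omega[T]\to X(\inn(t))$. I then set $x(\id_T):=\tilde x(\id_T)$ and extend $x$ elsewhere by the forced values (on $\partial\Omega[T]$ from $\phi$, and on degeneracies and isomorphic copies of $\id_T$ from the dendroidal structure).

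The main obstacle is the bookkeeping in the $x$-step: verifying that the push-forward $\tilde x$ is genuinely dendroidal, and then that the reconstructed $x$ satisfies (\ref{groth1}) at the interface between $\id_T$ and $\partial\Omega[T]$. Both reduce to the same computation built from (\ref{groth1}) on the $\phi(r)$'s and the contravariant functoriality of $\inn$. Uniqueness of $(t,x)$ is then immediate: $t$ is unique by Step 1, $\tilde x(\id_T)$ by Step 2, and the remaining values of $x$ are uniquely forced.
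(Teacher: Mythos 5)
Your proposal is correct and follows essentially the same route as the paper: it reduces the $t$-component to the $2$-coskeletality of $N_d(\SS)$ and pushes the $x$-data along the maps $X(\inn r)$ into the single dendroidal set $X(\inn(t))$, whose $k$-coskeletality then forces $x(\id_T)$; your $\tilde x$ is exactly the boundary datum the paper extracts via equation (\ref{groth1}) with $r=\id_T$, $\alpha=u$. The only difference is presentational: the paper proves uniqueness first and leaves existence implicit, whereas you construct the filler explicitly and read off uniqueness afterwards.
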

\begin{proof}
  Let us start with the remark that $k\ge 2$ is needed because dendroidal
  nerves of operads are 2-coskeletal (a generalisation of the well known fact
  for nerves of categories, proven in \cite{moerdijkweiss1,weiss}).
 
  Our task is to prove that, for any tree $T$ with $k+1$ vertices, every map
  of dendroidal sets $\phi\colon\partial \Omega[T]\To \int_\SS X$ extends
  uniquely as
  \[
  \xymatrix{\partial \Omega[T]\ar@{>->}[d]\ar[r]^-\phi &\int_\SS X\\
    \Omega[T] \ar@{.>}[ru]_-{\exists !}}
  \]

  We suppose existence and prove uniqueness first. Let $(t_1,x_1),
  (t_2,x_2)\in \big(\int_\SS X\big)_T$ be two dendrices filling the boundary
  $\phi$. The dendroidal set $N_d(\SS)$ is $k$-co\-ske\-le\-tal since
  $k\ge2$. Hence by equation (\ref{groth2}) we can infer that $t_1=t_2=t$. Let
  $u\colon R\To T$ be a face. Since $u^*(t,x_1)=u^*(t,x_2)$, we obtain
  $x_1\circ\Omega[u]=x_2\circ\Omega[u]$. On the other hand,
  \[
  x_i(u)=\big(x_i\circ\Omega[u]\big)(\id_R)
  \]
  for $i=1,2$, implying $x_1(u)=x_2(u)$. We can use now equation
  (\ref{groth1}) for $r=\id_T$ and $\alpha=u$ to conclude that
  $u^*(x_1(\id_T))=u^*(x_2(\id_T))$ as dendrices of shape $R$ in $X(\inn
  (t))$. Since this is true for any face $u\colon R\To T$ and $X(\inn(t))$ is
  $k$-coskeletal, it follows that also $x_1(\id_T)=x_2(\id_T)$. We can infer
  that $x_1=x_2$, thus the filler is unique.

  The argument above also contains the information how to construct a filler
  $(t,x)$ of $\phi$, giving a proof of the existence of such an extension.
\end{proof}
\begin{rem}
  If we restrict our attention to dendroidal sets where the only nontrivial
  dendrices are of linear shapes, Proposition \ref{grothcosk} implies that the
  same property is true for simplicial sets and the simplicial Grothendieck
  construction.
\end{rem}
\subsection{The homotopy coherent dendroidal nerve of an
  operad}\label{homot_coh}
Let $\E$ be a symmetric monoidal model category with an {\em interval} $H$: that
is, an object $H$ of $\E$, together with two {\em points} $0\colon I\To H$ and
$1\colon I\To H$, an augmentation $\epsilon\colon H\To I$ and an associative binary
operation $\vee\colon H\otimes H\To H$ for which $0$ is unital and $1$ is absorbing, satisfying
$\epsilon 1 = \epsilon 0 = \id$.  In this case one can modify the nerve
construction for operads enriched in $\E$ in such a way that the resulting
dendroidal set encodes also the homotopies in the operad.

An interesting example of such a situation is when $\E$ is the category of
categories with the usual cartesian product, the folk model structure and the
interval $H$ is the category
\[
\xymatrix{0\ar[r]^-\simeq & 1}
\]
with two objects and one isomorphism between them. The required structure on
$H$ is the obvious one: $0$ is the neutral element, $1$ is the absorbing one,
and the rest of the interval structure on $H$ is completely determined by the
previous choices.  Indeed, since the unit of the monoidal structure is the
terminal object in $\E$ (the category $*$ with one object and no other
morphisms than the identity), the counit $\epsilon\colon H\To *$ is
obvious. The various compatibility conditions imply that the monoid structure
$\vee\colon H\times H\To H$ is given by ``the maximum operation'': on the
objects, $i\vee j=\max\{i,j\}$.

Since we are interested only in this example, from now on $\E$ denotes the
category of categories with the structure mentioned above, although everything
can be carried out similarly in the general case.

Let us denote the category of operads enriched in $\E$ by $\Op_\E$. The
functor
\[
\hcNd\colon \Op_\E\To\dSets
\]
is defined by
\[
\hcNd(P)_T:=\Op_\E(W(\Omega(T)),P)
\]
where $W$ is the $W$-construction for coloured operads (see
\cite{bergermoerdijk2} for details) and $\Omega(T)$ is the
discrete version in $\E$ of the operad induced by the tree $T$. We will need
later an explicit description of $W(\Omega(T))$, hence we give it here.

Recall that for a tree $T$
\[
\Omega(T)=\Symm(\Omega^\pi(\bar T))
\]
where $\Symm\colon\Op_\E^\pi \To \Op_\E$ is the $\E$-enriched version of the
symmetrization functor from non symmetric operads to operads, and $\bar T$ is
any planar representative of $T$. Moreover, the $W$-construction commutes with
$\Symm$, thus
\[
W\Omega(T)=\Symm\big(W\Omega^\pi(\bar T)\big).
\]
This property allows us to describe $W\Omega (T)$ by using an arbitrary planar
representative of $T$. The objects of $W \Omega^\pi(\bar T)$ are the edges of
$T$. Suppose that $\sigma=(e_1,e_2,\ldots,e_n;e)$ is an ordered sequence of
objects. We can distinguish two cases for the category of operations
corresponding to $\sigma$:
\begin{enumerate*}
\item[(1)] If $\Omega^\pi(\bar T)(\sigma)=\emptyset$ -- the empty category --
  then also $W\Omega^\pi (\bar T)(\sigma)=\emptyset.$
\item[(2)] If $\Omega^\pi(\bar T)(\sigma)\ne\emptyset$, it follows that $\bar
  T$ has a subtree $\bar T_\sigma$ with leaves $e_1,\ldots, e_n$ and root
  $e$. The set of internal edges of $\bar T_\sigma$ is denoted by $\intedges
  (\bar T_\sigma)$. From the $W$-construction it follows then that
  \[
  W\Omega^\pi (\bar T)(\sigma)=\prod_{f\in\intedges(\bar T_\sigma)} H,
  \]
  where in case the product is empty the result is the unit object of the
  monoidal structure, which is the category $*$ with one object and no other
  morphism than the identity.
\end{enumerate*}
We still need to define the composition maps in the operad $W\Omega^\pi(\bar
T)$. Suppose that $\sigma=(e_1,\ldots,e_n;e)$ and $\rho=(f_1,\ldots, f_m;e_i)$
are ordered sequences of edges of $T$, such that neither $\Omega^\pi(\bar
T)(\sigma)$, nor $\Omega^\pi(\bar T)(\rho)$ is the empty category. It follows
that $\bar T$ has subtrees $\bar T_\sigma$ and $\bar T_\rho$, the sets of
internal edges of these trees are disjoint and the tree $\bar
T_{\sigma\circ_i\rho}$ obtained by grafting along the edge $e_i$ has one more
internal edge than the previous two together. Let us denote these sets of
internal edges by $\textnormal{int}(\sigma)$, $\textnormal{int}(\rho)$ and
$\textnormal{int}(\sigma\circ_i\rho)$ respectively. The composition map
\[
\circ_i\colon W\Omega^\pi(\bar T)(\sigma)\times W\Omega^\pi(\bar T)(\rho)\To
W\Omega^\pi(\bar T)(\sigma\circ_i\rho)
\]
is given by
\[
\xymatrix{
  \left(\prod\limits_{\textnormal{int}(\sigma)}H\right)\times\left(\prod\limits_{\textnormal{int}(\rho)}H\right)\simeq
  \left(\prod\limits_{\textnormal{int}(\sigma)\cup
      \textnormal{int}(\rho)}H\right)\times *\ar[r]^-{\id\times 1} &
  \prod\limits_{\textnormal{int}(\sigma\circ_i\rho)}H},
\]
where the functor $1\colon *\To H$ is the absorbing element of $H$.

This concludes the description of the operad $W\Omega(T)$. Note that we still
need to mention how the dendroidal structure on $\hcNd(P)$ is defined. If
$\delta\colon R\To T$ is a face map in $\Omega$ then it induces a map of
operads $\delta\colon W\Omega(R)\To W\Omega(T)$ via the neutral element
functor $0\colon *\To H$. In case $\delta$ is a degeneracy, the induced
functor is obtained by the monoid structure $\vee\colon H\times H\To H$. These
definitions are functorial, hence they induce a dendroidal structure on
$\hcNd(P)$.
\section{Dendroidal weak $n$-categories} \label{section:3}
For any set $A$ there exists a planar operad $As^\pi_A$ whose algebras are
small categories with set of objects $A$. The objects of $As^\pi_A$ are
ordered pairs $(a_1,a_2)\in A\times A$, and the sets of operations are defined
by
\begin{align*}
  As^\pi_A\big(\quad;(a,a)\big) &= *,\\
  As^\pi_A\big((a_1,a_2),(a_2,a_3),\ldots,(a_{n-1},a_n);(a_1,a_n)\big) &=*
\end{align*}
and in all the other cases the set of operations is empty (those ordered
sequences $\sigma=(c_1,c_2, \ldots,c_n;c)$ of objects of $A\times A$ for which
$As^\pi_A(\sigma)$ is not empty will be called {\em admissible}).

Let $\alpha\colon As^\pi_A\To\Sets$ be a map of operads. The data-part of such
an $\alpha$ determines for any $(a_1,a_2)\in A\times A$ a set
$\mathcal{A}(a_1,a_2)$ and for any admissible signature
$\sigma=\big((a_1,a_2),(a_2,a_3),\ldots,(a_{n-1},a_n);(a_1,a_n)\big)$ a
function
\[
comp_\sigma\colon\mathcal{A}(a_1,a_2)\times
\mathcal{A}(a_2,a_3)\times\cdots\times \mathcal{A}(a_{n-1},a_n)\To
\mathcal{A}(a_1,a_n)
\]
which in the particular case of $\sigma=\big(\quad;(a,a)\big)$ is a function
$*\To \mathcal{A}(a,a)$.  The compatibility-part of such an $\alpha$ ensures
that the various functions $comp_\sigma$ fit nicely to define units and
compositions of arrows in a category $\mathcal{A}$ with object set
$A$. Indeed, we arrive to the conclusion that the relevant ordered sequences are of
the type $\big((a_1,a_2),(a_2,a_3);(a_1,a_3)\big)$ and
$\big(\quad;(a,a)\big)$, etc.

Since the forgetful functor $U\colon \Op\To \Op^\pi$ is right adjoint to the
symmetrization functor, we infer that the algebras of the operad $As_A:=
\Symm(As^\pi_A)$ are categories with set of objects $A$ as well.
\begin{rem}
  Note that in the description of $As_A^\pi$-algebras given above we used the
  unconventional ``left-to-right'' composition order for arrows, i.e.
  \[
  \xymatrix@1{\mathcal{A}(a_1,a_2)\times
    \mathcal{A}(a_2,a_3)\ar[r]&\mathcal{A}(a_1,a_3)}
  \]
  instead of the conventional
  \[
  \xymatrix@1{\mathcal{A}(a_2,a_3)\times \mathcal{A}(a_1,a_2)\ar[r]
    &\mathcal{A}(a_1,a_3)}
  \]
  To avoid unnecessary complications in the future, arising only from
  notation, whenever we need to give such a composition map associated to some
  signature $\sigma$, we will always stick to the order determined by
  $\sigma$, thus the unconventional order. However, when it is required to
  give extra details with explicit composites of maps, we will use the
  conventional ``right-to-left'' order.
\end{rem}

Let $X$ be a dendroidal set and define the functor
\[
\Cat(X)_-:\Sets^\op\To \dSets,\quad \Cat(X)_A:=\underline{\dSets}
(N_d(As_A),X).
\]
The dendroidal set of {\em categories enriched in $X$} (see \cite{moerdijkweiss1}) is by definition the
Grothendieck construction of $\Cat(X)_-$. We denote it by
\[
\Cat(X):=\int_{\Sets}\Cat(X)_-.
\]
One can iterate the process above to obtain a definition of the dendroidal set
of {\em $n$-categories enriched in $X$}:
\begin{align*}
  \Cat^0(X)&:= X,\\
  \Cat^n(X)&:=\Cat(\Cat^{n-1})(X).
\end{align*}
To see why this definition is plausible, one can try particular choices of
$X$. For example if $X=N_d(\Sets)$, we can prove inductively that $\Cat^n(X)$
is the dendroidal nerve of strict $n$-categories with the classical definition
(see also Example 4.5.5 in \cite{weiss}). Indeed, for $n=1$
\begin{align*}
  \Cat(N_d(\Sets))&=\int\limits_{A\in\Sets}\underline\dSets(N_d(As_A),N_d(\Sets))\\
  &\simeq \int\limits_{A\in\Sets} N_d(\underline\Op(As_A,\Sets))\\
  &\simeq \int\limits_{A\in\Sets} N_d(\mathcal{C}ateg_A)\\
  &\simeq N_d(\mathcal{C}ateg),
\end{align*}
where $\mathcal{C}ateg$ denotes the usual monoidal category of small
categories, viewed as an operad. The second part of the inductive proof is
similar (one uses that for any monoidal category $\mathcal{M}$, $\underline
\Op(As_A, \mathcal{M})\simeq \mathcal{C}ateg_A(\mathcal{M})$, where the
right-hand side denotes the monoidal category of categories enriched in
$\mathcal{M}$, with set of object $A$).

We are interested here in another choice for $X$, which yields the dendroidal
definition of weak $n$-categories: it is plausible to define $X:=\hcNd(\Ctg)$
where $\Ctg$ is the category of small categories enriched in $\E$. (Recall
from Section \ref{homot_coh} that  $\E$ denotes the
symmetric monoidal model category of categories, together with the interval
$H$. Hence the set of functors between two fixed categories is a category with
natural transformations as maps.)
\begin{dfn}\label{def:weak}(\cite{moerdijkweiss1})
  The dendroidal set of {\em weak $n$-categories} is defined as follows:
  \begin{align*}
    \wCat^0&:=N_d(\Sets),\\
    \wCat^n&:=\Cat^{n-1}(\hcNd(\Ctg)) \quad\textnormal{ for $n>0$}.
  \end{align*}
\end{dfn}
The rest of this Section is dedicated to the study of coskeletality of weak
$n$-categories.  The first result is
\begin{lem}\label{hcnd3}
  If\ \ $T\in\Omega$ is a tree with $3$ vertices and $t, s\in \wCat^1_T$
  satisfy $\Sk_2(t)=\Sk_2(s)$ then $t=s$.
\end{lem}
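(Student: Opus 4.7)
The plan is to unpack the condition using the description of $\hcNd$ from Section \ref{homot_coh}: a dendrex $t\in\wCat^1_T=\hcNd(\Ctg)_T$ is an $\E$-enriched operad map $t\colon W\Omega(T)\to\Ctg$, and similarly for $s$. Any tree $T$ with $3$ vertices has exactly $2$ internal edges, which we denote $a$ and $b$, so the top signature $\sigma_T$ satisfies $W\Omega(T)(\sigma_T)=H\times H$; on every other subtree-signature the category of operations is either the terminal category $*$ or the interval $H$. Since $\Sk_2(t)=\Sk_2(s)$ already forces agreement on every face of $T$, the whole task reduces to showing that the two functors $t_T,s_T\colon H\times H\to\Ctg(\sigma_T)$ coincide.

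The key idea is that face maps and operadic compositions probe $H\times H$ from complementary corners. By the very definition of $\hcNd$, an inner face uses the neutral element $0\colon *\to H$ on the newly-internal edge, so $\partial_a\colon T/a\to T$ will induce the functor $H\to H\times H$, $h\mapsto(0,h)$, and agreement of $t,s$ on this face will force $t_T(0,h)=s_T(0,h)$ for every object and morphism $h$ of $H$; symmetrically, $\partial_b$ will force $t_T(h,0)=s_T(h,0)$. Dually, the operadic composition in $W\Omega(T)$ uses the absorbing element $1$ on each newly-created internal edge, so $(h,1)\in H\times H$ is the $\circ$-composite of an element of $H=W\Omega(T)(\sigma_{uv})$ (the $2$-vertex subtree containing $a$) with the unique point of $*=W\Omega(T)(\sigma_w)$. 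Because $t$ and $s$ are operad maps and $\Sk_2(t)=\Sk_2(s)$ fixes both the restriction at $\sigma_{uv}$ and the value at the vertex $w$, this forces $t_T(h,1)=s_T(h,1)$, and similarly $t_T(1,h)=s_T(1,h)$.

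Putting this together, $t_T$ and $s_T$ will agree on the subcategory $S=(\{0\}\times H)\cup(\{1\}\times H)\cup(H\times\{0\})\cup(H\times\{1\})$ of $H\times H$, which already contains all four objects. The concluding observation is that every morphism $(f,g)$ of $H\times H$ factors as $(f,\id)\circ(\id,g)$ with each factor in $S$; by functoriality this extends the agreement to all of $H\times H$, yielding $t=s$.

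The main obstacle is precisely this bookkeeping step: correctly identifying which element of $H\times H$ each face map and each operadic composition hits, keeping straight the distinction that faces use the neutral element $0$ while compositions use the absorbing element $1$, and then verifying that the resulting images cover all four objects of $H\times H$ and together generate every morphism. Once this is done, the conclusion follows immediately from $t$ and $s$ being $\E$-enriched operad maps.
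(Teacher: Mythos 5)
Your argument is correct and follows essentially the same route as the paper's proof: reduce to the single functor $t_{\sigma_T}\colon H\times H\to\Ctg(\sigma_T)$ on the top signature, use the inner faces (which insert the neutral point $0$) to pin down the $0$-slices, use the operadic compositions (which insert the absorbing point $1$) to pin down the $1$-slices, and conclude by generation in the contractible groupoid $H\times H$. The only cosmetic difference is that the paper works with a specific $3$-vertex tree and determines $x_{\sigma}$ on just the four arrows $Face\cup Opd$ before invoking a ``convex hull'' argument, whereas you cover all four slices and factor an arbitrary morphism as $(f,\id)\circ(\id,g)$.
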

\begin{proof}
  To illustrate our argument better, we will work with a chosen tree, the
  general case can be carried out in the same way. So let $T\in\Omega$ be the
  tree with a planar representative as below.
  \[
  \xy<0.08cm,0cm>: (0,0)*=0{}="1"; (20,0)*=0{}="2"; (10,-10)*=0{\bullet}="3";
  (30,-10)*=0{\bullet}="4"; (20,-20)*=0{\bullet}="5"; (20,-30)*=0{}="6";
  (3,-6)*{\scriptstyle d}; (17,-6)*{\scriptstyle e}; (13,-16)*{\scriptstyle
    b}; (27,-16)*{\scriptstyle c}; (18,-26)*{\scriptstyle a};
  (23,-20)*{\scriptstyle u}; (13,-10)*{\scriptstyle v}; (33,-10)*{\scriptstyle
    w}; "1";"3" **\dir{-}; "2";"3" **\dir{-}; "3";"5" **\dir{-}; "4";"5"
  **\dir{-}; "5";"6" **\dir{-};
  \endxy
  \]
  Let $x\in \wCat^1_T$ be a dendrex of shape $T$, that is a map of operads
  enriched in $\E$, $x:W\Omega(T)\To \Ctg$.  Let us adopt the notations of
  Section \ref{homot_coh}. It follows that $x$ consists of compatible functors
  $x_\sigma:H^{\textnormal{int}(\sigma)}\To \Ctg(\sigma)$ and the only functor
  we have to describe in terms of the $2$-skeleton of $x$ is the one
  corresponding to $\sigma=(d,e;a)$ (the other functors lie in the image of
  $\Sk_2(x)$).  In this case the domain of $x_\sigma$ is the groupoid
  $H^2=H_c\times H_b$, represented as the square
  \[
  \xymatrix@+2mm{(0,0)\ar[r]^-{(\id_0,b)}\ar[d]_{(c,\id_0)} &(0,1)\ar@{.>}[d]^{(c,\id_1)}\\
    (1,0)\ar@{.>}[r]^-{(\id_1,b)} &(1,1)}
  \]
  where we think of the copy of $H$ corresponding to an internal edge $f$ as
  the groupoid $H_f=0\stackrel{f}{\To} 1$.  Since the domain is a groupoid, we
  observe that if $x_\sigma$ is already defined on a ``connected part'' of the
  ``square'' $H_c\times H_b$, then it is defined on the ``convex hull'' of
  that component. We conclude that in order to know $x_\sigma$, it is enough
  to know its image on the sets of arrows $Opd=\{(c,\id_1),(\id_1,b)\}$ and
  $Face=\{(c,\id_0), (\id_0,b)\}$.

  To conclude the proof, first we show that since $x$ is a map of operads,
  $x_\sigma(Opd)$ is determined by $\Sk_2(x)$.  Indeed, the commutative square
  \[
  \xymatrix{H_c\times \{v\}\ar[r]^-{x\times x}\ar[d]_{\circ_b} &\Ctg(b;a)\times \Ctg(d,e;b)\ar[d]^{\circ_b}\\
    H_c\times H_b\ar[r]^-{x_\sigma} &\Ctg(d,e;a)}
  \]
  implies that we know $x_\sigma$ on the arrow $(c,\id_1)$, and a similar
  square gives the image of $(\id_1,b)$.

  Second, we show that $x_\sigma(Face)$ is in the image of $\Sk_2(x)$. We
  observe that the inner faces $\partial_b:R\To T$ and $\partial_c:R'\To T$,
  according to the definition of the dendroidal structure on $\wCat^1$, induce
  enriched operad maps $W\Omega(R)\To W\Omega(T)$ and $W\Omega(R')\To
  W\Omega(T)$ respectively. Each of these maps has in its image the
  corresponding element of $Dend$, hence $x(Face)$ is in the image of
  $\Sk_2(x)$.
\end{proof}
\begin{prop}\label{hcndn}
  Let $T\in\Omega$ be a tree such that $|\vvert(T)|\ge3$. If $t, s\in
  \wCat^1_T$ satisfy $\Sk_2(t)=\Sk_2(s)$ then $t=s$.
\end{prop}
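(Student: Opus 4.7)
The plan is to proceed by induction on $|\vvert(T)|$, with the base case $|\vvert(T)|=3$ supplied by Lemma~\ref{hcnd3}.

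For the inductive step, suppose $|\vvert(T)|=n\ge 4$ and the statement is known for every tree with fewer vertices. First I would show that $t$ and $s$ agree on the entire boundary $\partial\Omega[T]$: for any codimension-one face $\delta\colon R\to T$ we have $|\vvert(R)|=n-1\ge 3$, and applying $\delta^*$ to $\Sk_2(t)=\Sk_2(s)$ yields $\Sk_2(\delta^*t)=\Sk_2(\delta^*s)$, so $\delta^*t=\delta^*s$ by the inductive hypothesis.

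Then, to bridge from boundary agreement to full agreement, I would recall that a $T$-dendrex of $\wCat^1=\hcNd(\Ctg)$ is an enriched operad map $W\Omega(T)\to\Ctg$, equivalently a family of functors $t_\sigma\colon H^{\intedges(T_\sigma)}\to\Ctg(\sigma)$ over admissible signatures $\sigma$. If $T_\sigma\subsetneq T$, one can always find an outer vertex of $T$ not in $T_\sigma$, so $T_\sigma$ fits inside some codimension-one outer face of $T$ on which $t$ and $s$ already coincide; hence $t_\sigma=s_\sigma$. The remaining case is the ``top'' signature $T_\sigma=T$. For this I would rerun the hyperplane-covering argument of Lemma~\ref{hcnd3} but over every $e\in\intedges(T)$: the ``$e=1$'' hyperplane of $H^{\intedges(T)}$ is the image of operadic composition along the splitting $T=T^{\mathrm{down}}\cup_e T^{\mathrm{up}}$, and the proper-subtree case applied to $\sigma^{\mathrm{down}},\sigma^{\mathrm{up}}$ pins $t_\sigma$ and $s_\sigma$ down there; while the ``$e=0$'' hyperplane is the image of the map induced by the inner face $\partial_e\colon T/e\to T$ via the neutral element $0\colon *\to H$, and $\partial_e^*t=\partial_e^*s$ handles it. Because $|\intedges(T)|=|\vvert(T)|-1\ge 3$, every object and every coordinate-activation generating morphism of the cube $H^{\intedges(T)}$ lies on at least one such hyperplane (for a morphism activating coordinate $e'$, pick any $e\ne e'$), and so $t_\sigma=s_\sigma$.

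The only real obstacle is recognising that induction is needed at all: Lemma~\ref{hcnd3}'s treatment of the ``$e=0$'' hyperplane exploited that $T/e$ had at most two vertices and was therefore visible to $\Sk_2$ directly, but for $|\vvert(T)|\ge 4$ the face $T/e$ lives in a strictly higher skeleton, and the inductive hypothesis is precisely what supplies $\partial_e^*t=\partial_e^*s$ in that regime. Beyond this, the proof is a straightforward adaptation of the Lemma~\ref{hcnd3} argument with more internal edges to manipulate simultaneously.
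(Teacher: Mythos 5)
Your proof is correct and follows essentially the same route as the paper's: induction on $|\vvert(T)|$ with Lemma~\ref{hcnd3} as base case, reduction to the top-signature component $x_\sigma$, and a covering of the cube $H^{\intedges(T)}$ by the loci forced by operadic composition (coordinate $=1$) and by inner faces (coordinate $=0$), the latter controlled via the inductive hypothesis applied to $\partial_e^*t=\partial_e^*s$. The only differences are presentational: you cover the cube by full hyperplanes where the paper uses the hyperfaces through $(1,\dots,1)$ together with the edges out of the origin, and you make explicit the outer-face argument showing that the proper-subtree components are already determined, which the paper leaves as a parenthetical remark.
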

\begin{proof}
  We proceed by induction on $n=|\vvert(T)|$, the case $n=3$ is covered in
  Lemma \ref{hcnd3}. Suppose that $x:W\Omega(T) \To \Ctg$ is a dendrex of
  shape $T$. First we notice that we only need to describe the functor
  $x_\sigma:H^{{\textnormal {int}}(\sigma)}\To \Ctg(\sigma)$ in terms of the
  $\Sk_{n-1}(x)$ where $\sigma$ is the ordered sequence of colours
  $(\leaves(\bar{T});{\textnormal {root}}(\bar{T}))$ for a chosen planar
  representative $\bar T$. (The other components of $x$ are already contained
  in the image of $\Sk_{n-1}(x)$.)

  The domain of $x_\sigma$ is a groupoid with the shape of an $n$-cube, having
  in its vertices the trivial categories $(\epsilon_1,\ldots,\epsilon_n)$,
  $\epsilon_i\in\{0,1\}$. Denote by $\bar H_k$ the full subcategory of
  $H^{{\textnormal {int}}(\sigma)}$ spanned by the categories
  $(\epsilon_1,\ldots,\epsilon_{k-1},1,\epsilon_{k+1},\ldots,\epsilon_n)$,
  $\epsilon_i\in\{0,1\}$ (one of the hyperfaces of the $n$-cube, containing
  the vertex (1,1,\ldots,1)). Denote by $\phi_k$ the arrow $(0,0,\ldots,0)\To
  (0,\ldots,0,1,0,\ldots,0)$ of $H^{{\textnormal {int}}(\sigma)}$ (one of the
  edges of the $n$-cube, starting in $(0,0,\ldots,0)$). Define the sets
  \[
  Opd:=\{\bar H_k|k=1,2,\ldots n\} \quad \textnormal{and} \quad
  Face:=\{\phi_k|k=1,2,\ldots n\}.
  \]
  Since the ``convex hull'' of $Opd\cup Face$ is the whole domain of
  $x_\sigma$, it is enough to prove that $x_\sigma(Opd)$ is completely
  determined by $\Sk_{n-1}(x)$ and $x_\sigma(Face)$ is in the image of
  $\Sk_k(x)$. Both of these assertions are true, by similar arguments to the
  ones in the proof of Lemma \ref{hcnd3}.
\end{proof}
The following propositions of \cite{weiss} and \cite{moerdijkweiss1} helps us in proving that
$\wCat^1$ is 3-coskeletal.
\begin{prop}\label{ittay1}(Proposition 3.2.5 in \cite{weiss})
  Let $X$ be a dendroidal set and $k\ge 2$ an integer. If $X$ satisfies the
  strict inner Kan condition for all trees $T$ of degree at least $k$, then
  $X$ is $k$-coskeletal.
\end{prop}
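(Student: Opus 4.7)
My plan is to prove the proposition directly, with no induction needed: given a tree $T$ with $|\vvert(T)|\ge k+1$ and a map $\phi\colon \partial\Omega[T]\To X$, I will construct a unique extension to $\Omega[T]$. The key is to produce the filler from a strict inner Kan filling and then check compatibility on the remaining face using a second application of the Kan condition at one step down.

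First I would pick an inner edge $e$ of $T$ (which exists since $T$ has at least $k+1\ge 3$ vertices and trees have one fewer inner edges than vertices). The horn $\Lambda^e[T]\subset\partial\Omega[T]$ is the union of all faces other than $\partial_e\colon \Omega[T/e]\To \Omega[T]$. Restricting $\phi$ along $\Lambda^e[T]\hookrightarrow \partial\Omega[T]$ and invoking the strict inner Kan condition for $T$ (valid since $|\vvert(T)|\ge k$) produces a unique dendrex $t\colon \Omega[T]\To X$ extending $\phi|_{\Lambda^e[T]}$. Uniqueness of any extension of $\phi$ itself is then immediate: two such extensions would agree on $\Lambda^e[T]$ and so must coincide by the strict inner Kan property.

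For existence, the remaining obligation is to check $\partial_e^*t=\partial_e^*\phi$ as dendrices of $T/e$ in $X$. The plan is to show first that both have the same boundary in $\partial\Omega[T/e]$, then apply a second strict inner Kan argument in $T/e$ to conclude equality. For the boundary, I would use the dendroidal face identities: for any face $\partial_{g'}\colon R\To T/e$, the composite $\partial_e\circ\partial_{g'}$ factors as $\partial_g\circ\partial_{e'}$ for some face $\partial_g\neq\partial_e$ of $T$ and some face $\partial_{e'}$ of $T/g$. Since $\phi$ and $t$ agree on every non-$e$ face of $T$, we get
\[
(\partial_{g'})^*(\partial_e^*t)=(\partial_{e'})^*(\partial_g^*t)=(\partial_{e'})^*(\partial_g^*\phi)=(\partial_{g'})^*(\partial_e^*\phi),
\]
so $\partial_e^*t$ and $\partial_e^*\phi$ agree on every face of $T/e$. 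Finally, since $|\vvert(T/e)|=|\vvert(T)|-1\ge k$, the tree $T/e$ itself contains at least one inner edge $f$ (using $k\ge 2$), and the strict inner Kan condition applied to $T/e$ gives a unique filler for the horn $\Lambda^f[T/e]$. Both $\partial_e^*t$ and $\partial_e^*\phi$ restrict to this same horn (they agree on the full boundary, a fortiori on the horn), hence they are equal.

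The main obstacle in this plan is the face-identity step: verifying that for every face $\partial_{g'}$ of $T/e$, the composite $\partial_e\partial_{g'}$ factors through some face of $T$ distinct from $\partial_e$. For inner $g'$ this is straightforward, as contracting two inner edges commutes. The subtlety lies in outer faces of $T/e$, especially when the outer vertex in question arose from the merger of two adjacent vertices of $T$ upon contracting $e$; here one must carefully describe the factorization as a sequence of outer cuts on $T$ (possibly followed by an edge contraction) and confirm that the resulting intermediate face of $T$ is not $\partial_e$ itself. Once this combinatorial bookkeeping is in place, the rest of the argument assembles as above.
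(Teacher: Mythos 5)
The paper does not actually prove this proposition --- it is quoted verbatim from Weiss's thesis (Proposition 3.2.5) --- so there is no internal proof to compare against; I can only assess your argument on its own, and it is correct and is the standard argument for this statement. The strategy (uniquely fill the inner horn $\Lambda^e[T]$, then verify the missing $e$-face by matching boundaries and invoking unique filling one degree down on $T/e$, which still has degree $\ge k$ and, since $k\ge 2$, still has an inner edge) is sound, and the one step you leave as ``bookkeeping'' does go through. Concretely: if $g'$ is an inner edge of $T/e$, the two contractions commute; if $g'$ cuts an outer vertex of $T/e$ not arising from the contraction, that vertex is outer in $T$ with the same unique adjacent inner edge and the cut commutes with $\partial_e$; and if $g'$ cuts the merged vertex $v'$, then since $v'$ is outer in $T/e$ it has a single adjacent inner edge $f$, which (as $T$ is a tree) is adjacent in $T$ to only one endpoint of $e$, so the other endpoint of $e$ has $e$ as its sole adjacent inner edge, is therefore an outer vertex of $T$, and cutting it first and then the remaining endpoint exhibits $\partial_e\circ\partial_{v'}$ as a composite of two outer faces of $T$, neither of which is $\partial_e$. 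With that verified, your boundary computation, the counting of inner edges, and the final uniqueness step all hold, so the proposal is a complete and correct plan.
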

\begin{prop}\label{ittay3} (Proposition 7.2 in \cite{moerdijkweiss1})
  Let $P$ be a locally fibrant operad in $\E$ (that is, for any ordered
  sequence of objects $\sigma=(c_1,\ldots,c_n;c)$ the category $P(\sigma)$ is
  fibrant with respect to the folk model structure). Then $\hcN_d(P)$ is an
  inner Kan complex.
\end{prop}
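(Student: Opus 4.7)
The plan is to verify the inner Kan condition directly: given an inner horn $h\colon \Lambda^e[T] \To \hcNd(P)$, I would construct a filler $t\colon \Omega[T] \To \hcNd(P)$. A dendrex of shape $T$ in $\hcNd(P)$ is an enriched operad map $W\Omega(T) \To P$, i.e.\ a compatible family of functors $t_\rho\colon W\Omega(T)(\rho) \To P(\rho)$ indexed by signatures $\rho$ of subtrees of $T$. The horn supplies $t_\rho$ for every signature arising from a face of $T$ different from $\partial_e$; since every proper subtree of $T$ already lies inside some outer face, the only genuinely new component to produce is the one at the maximal signature $\sigma = (\leaves(T);\textnormal{root}(T))$, namely a functor $t_\sigma\colon H^{\intedges(T)} \To P(\sigma)$.

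Next I would pin down what the horn already forces on this cube $H^{\intedges(T)} = \prod_{f\in\intedges(T)} H_f$, where each $H_f$ is a free-living isomorphism $0 \stackrel{\simeq}{\To} 1$. Every inner face $\partial_f$ with $f\neq e$ is induced by the neutral element functor $0\colon *\To H_f$, so the horn fixes $t_\sigma$ on the subcube $\{f=0\}$; the outer faces, together with the composition law of $W\Omega(T)$ that grafts across an internal edge by inserting the absorbing element $1$, pin down $t_\sigma$ on further subcomplexes of the cube. The upshot is that the horn determines $t_\sigma$ on the full boundary of the cube except the face $\{e=0\}$, together with the transition arrows running from $\{e=0\}$ to $\{e=1\}$.

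The main step is then a lifting argument. Because each $H_f$ contains only an isomorphism, $t_\sigma$ must send every non-identity arrow of $H^{\intedges(T)}$ to an isomorphism in $P(\sigma)$, so producing the missing values amounts to choosing a family of isomorphisms in $P(\sigma)$ whose targets are already prescribed by the horn. Local fibrancy of $P$ says precisely that each $P(\sigma)$ is fibrant in the folk model structure on $\Ctg$, i.e.\ that the unique functor $P(\sigma)\To *$ is an isofibration, and so any prescribed isomorphism in $P(\sigma)$ admits a lift with any prescribed source. Carrying this out cell by cell, by induction on the dimension of the missing face, yields $t_\sigma$ and thus the filler $t$.

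The step I expect to be most delicate is the combinatorial bookkeeping required to check that the chosen lifts simultaneously satisfy every operadic composition identity in $W\Omega(T)$ and match the data supplied by outer-face subtrees of $T$ containing $e$. I would organise the construction by stratifying the missing region according to the number of coordinates set to $0$ and performing the lifts in order of decreasing stratum dimension, so that each new lift only has to agree with data already fixed on a strictly lower-dimensional stratum; each individual compatibility check should then reduce to a single isofibration lifting, solvable by the folk fibrancy hypothesis on $P$.
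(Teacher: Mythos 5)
The paper itself offers no proof of this proposition: it is quoted verbatim from Moerdijk and Weiss (Proposition 7.2 of \cite{moerdijkweiss1}), so the only comparison available is with that external argument, and your outline is essentially that argument; I see no gap in substance. Your reductions are all correct: every proper subtree of $T$ lies in some outer face, so the horn already supplies every component $t_\rho$ except the one at the maximal signature $\sigma$; the inner faces $\partial_f$ with $f\ne e$ fix $t_\sigma$ on the hyperfaces $\{f=0\}$ of the cube $H^{\intedges(T)}$; and the operadic composition of $W\Omega(T)$ (insertion of the absorbing element) fixes it on every hyperface $\{f=1\}$, including $f=e$. Two refinements are worth recording. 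First, since $H^{\intedges(T)}$ is a codiscrete groupoid, when $|\intedges(T)|\ge 2$ (equivalently $|\vvert(T)|\ge 3$) every object and every arrow of the cube is already pinned down by chaining through the known hyperfaces, so the filler is \emph{unique} and your inductive cell-by-cell lifting degenerates to the consistency check that the forced assignments respect composition; this is exactly the strict inner Kan condition the paper exploits in Proposition \ref{hcndn} and Corollary \ref{3coskel}, and the consistency follows from the dendroidal identities among the faces assembled in $\Lambda^e[T]$. Second, the only genuine choice, and the only place the fibrancy hypothesis enters, is the two-vertex case $T=\ccor_n\circ_i\ccor_m$: there $t_\sigma(1)$ is forced to be the operadic composite of the two outer faces, and one must produce an object $t_\sigma(0)$ together with an isomorphism $t_\sigma(0)\To t_\sigma(1)$, i.e.\ a lift of the trivial cofibration $1\colon *\To H$ against the fibration $P(\sigma)\To *$ (in $\Ctg$ the identity isomorphism will do). Your formulation of the lifting step --- ``any prescribed isomorphism admits a lift with any prescribed source'' --- is not quite the right lifting problem, since what is prescribed is the \emph{target} object and what must be produced is an isomorphism into it; this is a slip of phrasing rather than a gap.
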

\begin{cor}\label{3coskel}(Lemma 4.6.3 in \cite{weiss}) The dendroidal set $\wCat^1$ is
  $3$-coskeletal.
\end{cor}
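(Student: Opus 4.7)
The plan is to combine the inner Kan condition for $\hcNd(\Ctg)$ with the uniqueness statement of Proposition \ref{hcndn}, and then appeal to the general criterion of Proposition \ref{ittay1}. First note that by Definition \ref{def:weak}, $\wCat^1 = \hcNd(\Ctg)$, so we only need to verify that this dendroidal set is $3$-coskeletal.

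I would proceed in three steps. First, observe that $\Ctg$ is locally fibrant as an operad enriched in $\E$: each hom-category of $\Ctg$ is a category of enriched functors, which is fibrant in the folk model structure (every object is fibrant there, since the condition is vacuous). Hence Proposition \ref{ittay3} yields that $\hcNd(\Ctg)$ is an inner Kan complex, producing a filler for every inner horn $h\colon \Lambda^e[T]\To \wCat^1$.

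Second, I would upgrade this to the \emph{strict} inner Kan condition for all trees $T$ with $|\vvert(T)|\ge 3$, meaning that the filler is unique. The key point is that for such a $T$, the horn $\Lambda^e[T]$ already contains the full $2$-skeleton of $\Omega[T]$: any subtree with at most two vertices is a face of some codimension-$1$ face of $T$ (other than $\partial_e$), since $|\vvert(T)|-1\ge 2$. Consequently, if $t_1,t_2\in\wCat^1_T$ are two fillers of the same inner horn $h$, then $\Sk_2(t_1)=\Sk_2(t_2)$, and Proposition \ref{hcndn} forces $t_1=t_2$.

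Third, with the strict inner Kan condition established for every tree of degree at least $3$, Proposition \ref{ittay1} applied with $k=3$ immediately yields that $\wCat^1$ is $3$-coskeletal. The main subtlety is the second step, namely checking that $\Lambda^e[T]$ determines $\Sk_2$; this is where the hypothesis $|\vvert(T)|\ge 3$ is essential, since a $2$-vertex tree $T$ has no faces of codimension $1$ with at least two vertices other than $\partial_e$ itself, so the $2$-skeleton argument would collapse.
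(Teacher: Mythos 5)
Your proposal is correct and follows essentially the same route as the paper: establish the inner Kan condition via Proposition \ref{ittay3}, observe that for $|\vvert(T)|\ge 3$ any two fillers of an inner horn have equal $2$-skeleta so that Proposition \ref{hcndn} forces uniqueness, and conclude by Proposition \ref{ittay1}. Your second step merely spells out the justification for $\Sk_2(t_1)=\Sk_2(t_2)$, which the paper asserts without elaboration.
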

\begin{proof}
  In view of Proposition \ref{ittay1} it is enough to prove that $\wCat^1$
  satisfies the strict inner Kan condition for all trees with $|\vvert(T)|\ge
  3$. Let $T$ be such a tree. Theorem \ref{ittay3} implies that $\wCat^1$ is
  an inner Kan complex, hence every inner horn $\Lambda^e[T]\To\wCat^1$ has at
  least one filler $t$. Suppose that $s$ is an other filler of the same
  horn. Since $|\vvert(T)|\ge 3$, it follows that $\Sk_2(t)=\Sk_2(s)$. We
  conclude thus by Proposition \ref{hcndn} that $t=s$.
\end{proof}
Corollary \ref{3coskel} implies that $\wCat^n$ is $3$-coskeletal for every
$n\ge1$. (Note that $\wCat^0$ is already 2-coskeletal.) To prove this, the
following lemma is needed.
\begin{lem}\label{gencoskel}
  If $X$ is a $k$-coskeletal dendroidal set and $Z$ is an arbitrary dendroidal
  set then $\underline{\dSets}(Z,X)$ is $k$-coskeletal.
\end{lem}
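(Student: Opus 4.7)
The plan is to unwind $k$-coskeletality and reduce the statement to a question about the $k$-skeleton of a Boardman-Vogt tensor product. Since every dendroidal set is a colimit of representables, $\underline{\dSets}(Z,X)$ is $k$-coskeletal precisely if, for every tree $T$ with $|\vvert(T)|>k$, every map $\phi\colon\partial\Omega[T]\to\underline{\dSets}(Z,X)$ admits a unique extension to $\Omega[T]$. Since $Z\otimes-$ preserves colimits in each variable (being the left half of a two-variable adjunction), the tensor-hom adjunction identifies $\phi$ with a map $\tilde\phi\colon Z\otimes\partial\Omega[T]\to X$ (where $Z\otimes\partial\Omega[T]$ is the colimit of $Z\otimes\Omega[S]$ over the proper faces $S\hookrightarrow T$), and the original extension problem becomes that of uniquely extending $\tilde\phi$ along the natural map $Z\otimes\partial\Omega[T]\to Z\otimes\Omega[T]$.

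Next, the $k$-coskeletality of $X$ is exploited via the observation that, for every dendroidal set $Y$, the adjunction chain $(i_k)_!\dashv i_k^*\dashv (i_k)_*$ combined with the identity $X=\coSk_k(X)$ induces a natural bijection $\dSets(Y,X)\cong\dSets(\Sk_k Y,X)$. Applying this to both $Y=Z\otimes\Omega[T]$ and $Y=Z\otimes\partial\Omega[T]$, the unique extendability of $\tilde\phi$ becomes equivalent to the statement that the natural map of $k$-skeletons
\[
\Sk_k\bigl(Z\otimes\partial\Omega[T]\bigr)\To \Sk_k\bigl(Z\otimes\Omega[T]\bigr)
\]
is an isomorphism whenever $|\vvert(T)|>k$.

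The main obstacle is establishing this isomorphism. Concretely, one must show that every dendrex $\sigma\colon\Omega[S]\to Z\otimes\Omega[T]$ with $|\vvert(S)|\le k$ factors through $Z\otimes\partial\Omega[T]$. Since $\otimes$ preserves colimits and $Z$ is a colimit of representables, one reduces to the case $Z=\Omega[R]$, where the previously cited Lemma 4.3.3 of \cite{weiss} identifies $\Omega[R]\otimes\Omega[T]$ with $N_d(\Omega(R)\otimes\Omega(T))$, making $\sigma$ an operad map $\Omega(S)\To\Omega(R)\otimes\Omega(T)$. Using the explicit description of operations in the Boardman-Vogt tensor product in terms of the two families of generators $p\otimes x$ and $a\otimes q$, one tracks the set of vertices of $T$ appearing in the generators of the image of $\sigma$; a combinatorial analysis, combining the epi-mono factorization of the induced map on the $T$-component with inner-edge contractions for those vertices of $T$ that only occur inside composite operations, produces a proper face $T'\hookrightarrow T$ through which $\sigma$ factors. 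This constructive step, showing that the constraint $|\vvert(S)|\le k<|\vvert(T)|$ always leaves enough ``room'' in $T$ to eliminate either an outer vertex or an inner edge, is where essentially all the combinatorial content of the lemma is concentrated.
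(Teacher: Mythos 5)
Your argument is correct and is essentially the paper's proof in unwound form: both reduce the statement, via the tensor--hom adjunction and the identification $\dSets(Y,X)\simeq\dSets(\Sk_k Y,X)$ for $k$-coskeletal $X$, to the single combinatorial fact that $\Sk_k$ of a tensor product is already detected by $\Sk_k$ of one factor --- the paper states this as the containment $\Sk_k(Y\otimes Z)\subseteq(\Sk_k Y)\otimes Z$ and runs a five-line chain of natural bijections, while you specialize to $Y=\Omega[T]$ and phrase it as factoring low-degree dendrices of $Z\otimes\Omega[T]$ through $Z\otimes\partial\Omega[T]$. Your sketch of that factorization (reduction to representables and tracking which subface of $T$ supports the image operad map) is more detail than the paper gives; just note that for the \emph{existence} half of the extension you also need $\Sk_k(Z\otimes\partial\Omega[T])\To\Sk_k(Z\otimes\Omega[T])$ to be injective, not merely surjective --- a point the paper's ``$\subseteq$'' glosses over in exactly the same way.
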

\begin{proof}
  The goal is to see that for any dendroidal set $Y$ there exists a natural
  bijection
  \[
  \dSets(Y,\underline{\dSets}(Z,X))\simeq \dSets(\Sk_k
  Y,\underline{\dSets}(Z,X)).
  \]
  Indeed, once one observes that $\Sk_k(Y\otimes Z)\subseteq (\Sk_k Y)\otimes
  Z$, one can conclude that there are natural one-to-one correspondences
  between the following Hom sets:
  \begin{align*}
    \dSets(Y,\underline{\dSets}(Z,X))&\simeq \dSets(Y\otimes Z,X)\\
    &\simeq\dSets(Y\otimes Z,\coSk_k X)\\
    &\simeq \dSets(\Sk_k(Y\otimes Z),X)\\
    &\simeq \dSets((\Sk_k Y)\otimes Z,X)\\
    &\simeq \dSets(\Sk_k Y,\underline{\dSets}(Z,X)).
  \end{align*}
\end{proof}
\begin{thm}\label{gencoskel2}
  For every $n\ge 1$ the dendroidal set $\wCat^n$ is $3$-coskeletal.
\end{thm}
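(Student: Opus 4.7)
The plan is a straightforward induction on $n$, using the two coskeletality-preservation results proven earlier together with the base case supplied by Corollary \ref{3coskel}.

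The base case $n=1$ is exactly Corollary \ref{3coskel}. For the inductive step, observe that the iterative definition unwinds to
\[
\wCat^{n+1}=\Cat^n(\hcNd(\Ctg))=\Cat\big(\Cat^{n-1}(\hcNd(\Ctg))\big)=\Cat(\wCat^n)
\]
for every $n\ge 1$, so that by the definition of $\Cat(-)$ as a dendroidal Grothendieck construction,
\[
\wCat^{n+1}=\int_{A\in\Sets}\underline{\dSets}\big(N_d(As_A),\wCat^n\big).
\]

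Assume inductively that $\wCat^n$ is $3$-coskeletal. For every set $A$, Lemma \ref{gencoskel} (applied with $k=3$, $Z=N_d(As_A)$, $X=\wCat^n$) then yields that the internal hom $\underline{\dSets}(N_d(As_A),\wCat^n)$ is $3$-coskeletal. In other words, the diagram $\Cat(\wCat^n)_-:\Sets^\op\To\dSets$ takes values in $3$-coskeletal dendroidal sets. Since $3\ge 2$, Proposition \ref{grothcosk} applies and shows that the Grothendieck construction
\[
\int_{A\in\Sets}\Cat(\wCat^n)_A=\wCat^{n+1}
\]
is itself $3$-coskeletal, completing the induction.

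No step here is a genuine obstacle: the only subtle point is that the induction must start at $n=1$ rather than $n=0$, because $\wCat^0=N_d(\Sets)$ is only $2$-coskeletal and Proposition \ref{grothcosk} requires $k\ge 2$, so the two ingredients of the inductive step fit together precisely when $\wCat^n$ is at least $3$-coskeletal from $n=1$ onward.
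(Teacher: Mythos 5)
Your proof is correct and follows exactly the same route as the paper: induction on $n$ with base case Corollary \ref{3coskel}, applying Lemma \ref{gencoskel} to get $3$-coskeletality of $\underline{\dSets}(N_d(As_A),\wCat^n)$ and then Proposition \ref{grothcosk} to conclude for the Grothendieck construction. Your added remark on why the induction must start at $n=1$ is a useful clarification but does not change the argument.
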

\begin{proof}
  We proceed by induction on $n$. It was proven in Corollary \ref{3coskel}
  that $\wCat^1$ is $3$-coskeletal. Suppose that $\wCat^n$ is
  $3$-coskeletal. It follows from Lemma \ref{gencoskel} that for any set $A$,
  the dendroidal set $\Cat(\wCat^n)_A=\underline{\dSets}(N_d(As_A),\wCat^n)$
  is $3$-coskeletal. Hence Proposition \ref{grothcosk} implies that
  \[
  \wCat^{n+1}=\int_{\Sets} \Cat(\wCat^n)_{-}
  \]
  is $3$-coskeletal.
\end{proof}
\section{Dendroidal weak 1- and 2-categories}\label{section:4}
\subsection{Weak 1-categories}\label{subsection:41}
We start the Section with the description of the dendroidal set
$\wCat^1=\hcNd(\Ctg)$. We can use Corollary \ref{3coskel} to come to the
conclusion that it is enough to describe the sets
$(\wCat^1)_T=\Op_\E(W\Omega(T), \Ctg)$ for trees $T$ with at most 3 vertices.
Before we start with the description, let us make a useful notational
convention: from now on, given $n$ categories $X_1,\ldots, X_n$ and integers
$1\le i\le j\le n$, $(X)_i^j$ will denote the category $X_i\times\cdots\times
X_j$.
\begin{enumerate*}
\item[(1)] The first choice of $T$ is the tree $|$. In this case
  $W\Omega(T)=\Omega(|)$ is the operad on one object and only the identity
  operation, hence an element of $(\wCat^1)_|$ is the same as the choice of a
  category.
\item[(2)] Let $T=\ccor_n$, the $n$-corolla. In this case still
  $W\Omega(\ccor_n)=\Omega(\ccor_n)$, hence an element of
  $(\wCat^1)_{\ccor_n}$ is the same as the choice of $n+1$ categories $X_1,
  \ldots,X_n$ and $X$, together with a functor $F:(X)_1^n\To X$. Note that in
  case $n=0$, the $\E$-enriched operad structure on $\Ctg$ implies that
  $(X)_1^n$ has to be considered the unit of the $\E$-enriched monoidal
  category $\Ctg$. This unit is the category $*$ on one object and no other
  arrows than the identity. Hence we infer that a dendrex of shape $\ccor_0$
  amounts to the choice of a category $X$, together with an object of it.
\item[(3)] Let $T=\ccor_n\circ_i\ccor_m$. Let us give a detailed description
  of maps of operads $\alpha:W\Omega(T)\To \Ctg$ since this is the first time
  when the interval $H$ plays a role in the definition of the operad
  $W\Omega(T)$.  So far it is clear that, as in cases (1) and (2), such an
  $\alpha$ determines
  \begin{enumerate*}
  \item[(3a)] a choice of $n+1$ categories $X_1,\ldots,X_n,X$ together with a
    functor $F_1:(X)_1^n\To X$;
  \item[(3b)] a choice of $m$ categories $Y_1,\ldots,Y_m$ and a functor
    $F_2:(Y)_1^m\To X_i$.
  \end{enumerate*}
  There is one more building part of such an $\alpha$, which is a functor
  \[
  H\To \Ctg\big((X)_1^{i-1}\times(Y)_1^m\times(X)_{i+1}^n,X\big).
  \]
  But such a functor contains exactly the same data as the choice of two
  functors $G,G':(X)_1^{i-1}\times (Y)_1^m\times (X)_{i+1}^n\To X$ and a
  natural isomorphism $\phi:G\To G'$.

  The only thing we have not covered yet with the investigation of such a
  dendrex is that $\alpha$ is a map of operads, which means that the diagram
  of categories
  \[
  \xymatrix{\ast\times \ast\ar[r]^-{\alpha\times\alpha}\ar[d]_{\circ_i}
    &\Ctg\big((X)_1^n,X\big)\times
    \Ctg\big((Y)_1^m,X_i\big)\ar[d]^{\circ_i}\\
    H\ar[r]^-\alpha
    &\Ctg\big((X)_1^{i-1}\times(Y)_1^m\times(X)_{i+1}^n,X\big)}
  \]
  is commutative. One can spell out that this yields to $G'=F_1\circ_i
  F_2$. We can conclude thus that the last bit of information $\alpha$
  provides is
  \begin{enumerate*}
  \item[(3c)] a choice of a functor $G:(X)_1^{i-1}\times (Y)_1^m\times
    (X)_{i+1}^n\To X$ and a natural isomorphism $\phi:G\To F_1\circ_iF_2$.
  \end{enumerate*}
\end{enumerate*}
For the remaining choices of the tree $T$ we give only the result.
\begin{enumerate*}
\item[(4)] Let $T=\ccor_n\circ_i(\ccor_m\circ_j\ccor_k)$. A map of operads
  $W\Omega(T)\To \Ctg$ is the same as
  \begin{enumerate*}
  \item[(4a)] a choice of $n+1$ categories $X_1,\ldots,X_n,X$ together with a
    functor $F_1:(X)_1^n\To X$;
  \item[(4b)] a choice of $m$ categories $Y_1,\ldots,Y_m$ and a functor
    $F_2:(Y)_1^m\To X_i$;
  \item[(4c)] a choice of $k$ categories $Z_1,\ldots,Z_k$ and a functor
    $F_3:(Z)_1^k\To Y_j$;
  \item[(4d)] a choice of a functor $G_1:(X)_1^{i-1}\times (Y)_1^m\times
    (X)_{i+1}^n\To X$ and a natural isomorphism $\phi_1:G_1\To F_1\circ_iF_2$;
  \item[(4e)] a choice of a functor $G_2:(Y)_1^{j-1}\times (Z)_1^k\times
    (Y)_{j+1}^m\To X_i$ and a natural isomorphism $\phi_2:G_2\To
    F_2\circ_jF_3$;
  \item[(4f)] a choice of a functor $K:(X)_1^{i-1}\times
    (Y)_1^{j-1}\times(Z)_1^k\times(Y)_{j+1}^m\times (X)_{i+1}^n\To X$ and two
    natural isomorphisms
    \[
    \psi_1:K\To F_1\circ_iG_2,\quad \psi_2:K\To G_1\circ_{\tilde j}F_3
    \]
    where $\tilde j=i+j-1$, such that the following diagram of natural
    isomorphisms is commutative:
    \[
    \xymatrix@C+5mm{K\ar[r]^-{\psi_1}\ar[d]_{\psi_2} &F_1\circ_i G_2\ar[d]^{F_1\circ_i \phi_2}\\
      G_1\circ_{\tilde i}F_3\ar[r]^-{\phi_1\circ_{\tilde j} F_3} &F_1\circ_i
      F_2\circ_j F_3}
    \]
  \end{enumerate*}
\item[(5)] Let $T=\ccor_n\circ_{i,j}(\ccor_m,\ccor_k)$ for some $1\le i<j\le
  n$. A map of operads $W\Omega(T)\To \Ctg$ is the same as
  \begin{enumerate*}
  \item[(5a)] a choice of $n+1$ categories $X_1,\ldots,X_n,X$ together with a
    functor $F_1:(X)_1^n\To X$;
  \item[(5b)] a choice of $m$ categories $Y_1,\ldots,Y_m$ and a functor
    $F_2:(Y)_1^m\To X_i$;
  \item[(5c)] a choice of $k$ categories $Z_1,\ldots,Z_k$ and a functor
    $F_3:(Z)_1^k\To X_j$;
  \item[(5d)] a choice of a functor $G_1:(X)_1^{i-1}\times (Y)_1^m\times
    (X)_{i+1}^n\To X$ and a natural isomorphism $\phi_1:G_1\To F_1\circ_iF_2$;
  \item[(5e)] a choice of a functor $G_2:(X)_1^{j-1}\times (Z)_1^k\times
    (X)_{j+1}^n\To X$ and a natural isomorphism $\phi_2:G_2\To F_2\circ_jF_3$;
  \item[(5f)] a choice of a functor $K:(X)_1^{i-1}\times
    (Y)_1^{m}\times(X)_{i+1}^{j-1}\times(Z)_1^k\times (X)_{j+1}^n\To X$ and
    two natural isomorphisms
    \[
    \psi_1:K\To F_1\circ_iG_2,\quad \psi_2:K\To G_1\circ_{\tilde j}F_3
    \]
    where $\tilde j=i+j-1$ (we suppose $j>i$), such that the following diagram
    of natural isomorphisms is commutative:
    \[
    \xymatrix@C+5mm{K\ar[r]^-{\psi_1}\ar[d]_{\psi_2} &F_1\circ_i G_2\ar[d]^{F_1\circ_i \phi_2}\\
      G_1\circ_{\tilde i}F_3\ar[r]^-{\phi_1\circ_{\tilde j} F_3}
      &F_1\circ_{i,j}(F_2, F_3)}
    \]
  \end{enumerate*}
\end{enumerate*}
We are going to illustrate with some examples the dendroidal structure of
$\wCat^1$ in the context described above.  Let $T=\ccor_n\circ_i\ccor_m$,
hence a dendrex $\alpha$ of shape $T$ is the same thing as the data described
in (3) above. If $\partial:\ccor_n\To T$ is the obvious outer face of $T$ then
$\partial^*(\alpha)$ corresponds to the choice of the categories
$X_1,\ldots,X_n, X$ and the functor $F_1:(X)_1^n\To X.$ If
$\partial:\ccor_{n+m-1}\To T$ is the inner face of $T$ then
$\partial^*(\alpha)$ corresponds to the choice of the categories
$X_1,\ldots,X_{i-1},Y_1,\ldots,Y_m,X_{i+1},\ldots,X_n$, $X$ and the functor
$G:(X)_1^{i-1}\times(Y)_1^m\times (X)_{i+1}^n\To X$. (The choice of $G$
instead of $F_1\circ_i F_2$ follows from the definition of the map of operads
$\partial\colon W\Omega(\ccor_{n+m-1})\To W\Omega(T)$.)

One can similarly decipher what a degeneracy looks like. A simple case of such
occurs when $R=\ccor_n\circ_i\ccor_1$, $T=\ccor_n$ and $\sigma:R\To T$ is the
degeneracy in question. If $\beta$ is a dendrex of shape $T$, that is a choice
of categories $X_1,\ldots,X_n,X$ and a functor $F_1:(X)_1^n\To X$, then
$\sigma^*(\beta)$ adds to the information contained in $\beta$ the identity
functor $\id:X_i\To X_i$, and the identity natural transformation $F_1\To
F_1\circ_i\id$.
\subsection{Weak $2$-categories}
We turn our attention now to the dendroidal set $\wCat^2$. Our goal is to
unpack the definition and compare the result with bicategories. It will become
apparent later that the right notion to compare the data of $\wCat^2$
contained in lower degrees is unbiased bicategories and their
homomorphisms. These notions were defined by Tom Leinster in
\cite{leinster}. We recall them in the Appendix.

The Section is organized as follows: First we analyse
the sets $\wCat_|^2$, $\wCat_{\ccor_1}^2$ and their relations to unbiased
bicategories, and we prove that the category of unbiased bicategories is
isomorphic to the homotopy category of dendroidal weak 2-categories. Then e
conclude the Section by a conjecture that predicts a stronger relation between
bicategories and dendroidal weak 2-categories.
\subsubsection{Dendroidal weak 2-categories}
In this Subsection we analyse those components of the dendroidal set $\wCat^2$
which will correspond to bicategories and homomorphisms of bicategories.
\subsubsection{Dendrices of shape $|$} \label{bicategories} Since
\[
(\wCat^2)_|=\left(\int_{\Sets}\Cat(\hcNd(\Ctg))_-\right)_|,
\]
the definition of the Grothendieck construction implies that an element of
$(\wCat^2)_|$ is a pair $(A,x)$, where $A$ is a set and $x$ is a dendrex of
shape $|$ in the dendroidal set $\Cat(\hcNd(\Ctg))_A$. Hence
\[
x\in\dSets(N_d(As_A)\otimes\Omega[|],\hcNd(\Ctg))=\dSets(N_d(As_A),\hcNd(\Ctg)).
\]
Since $\hcNd(\Ctg)$ is $3$-coskeletal, it is enough to look at the degree 0,
1, 2 and 3 components of $x$.
\begin{enumerate*}
\item[(0)] The degree 0 component of $x$ is the map of sets
  \[
  x_|\colon N_d(As_A)_|\To\hcNd(\Ctg)_|.
  \]
  Since $N_d(As_A)_|$ consists of the objects of the operad $As_A$ and
  $\hcNd(\Ctg)_|$ consists of categories, it follows that $x_|$ is the same
  thing as the choice of a category $\mathcal{A}(a_1,a_2)$ for each ordered
  pair $(a_1,a_2)\in A\times A$.
\item[(1)] Let us look at the $x_{\ccor_n}$ component, $n\in \N$. There are
  three cases to distinguish.

  First, an element in $N_d(As_A)_{\ccor_0}$ consists of a pair $(a,a)$ where
  $a\in A$, and the operation $\ast\in As_A\big(\quad ;(a,a)\big)$. We have
  seen in Subsection \ref{subsection:41} that an element of $\hcNd(\Ctg)_{\ccor_0}$ is a
  category together with an object of it. Since $x$ has to be compatible with
  the face map $|\To \ccor_0$, it follows that $x_{\ccor_0}$ picks for each
  $a\in A$ a functor $\Psi_a:*\To \mathcal{A}(a,a)$.

  Second, an element in $N_d(As_A)_{\ccor_1}$ consists of a pair $(a_1,a_2)\in
  A^2$ and the operation $\ast\in As_A\big((a_1,a_2);(a_1,a_2)\big)$, which is
  also the corresponding unit operation in the operad $As_A$. An element of
  $\hcNd(\Ctg)_{\ccor_1}$ is a functor between two chosen categories. Again,
  since $x$ has to be compatible with the various face and degeneracy maps, it
  follows that $x_{\ccor_1}$ amounts to choosing the identity functor on every
  already chosen category $\mathcal{A}(a_1,a_2)$. Hence $x_{\ccor_1}$ does not
  contribute with any new information.

  Third, for $n\ge 2$ $x_{\ccor_n}$ picks for each admissible ordered sequence
  \[
  \sigma=\big((a_1,a_2),(a_2,a_3),\ldots,(a_{n-1},a_n);(a_1,a_n)\big)
  \]
  of $n+1$ objects of $As_A$ a functor
  \[
  \Psi_\sigma:\mathcal{A}(a_1,a_2)\times\cdots\times
  \mathcal{A}(a_{n-1},a_n)\To \mathcal{A}(a_1,a_n).
  \]
  We can include the cases $n=0,1$ in the third one in the obvious way.
\item[(2)] The degree 2 component of $x$ consists of $x_T$ where
  $T={\ccor_n\circ_i\ccor_m}$ for the various $n,m,i\in \N$, $n\ne 0$.  There
  are face maps into the tree $T$ from the $m, n$ and $m+n-1$ corollas, and in
  case $n=1$ or $m=1$ there are also degeneracy maps with $T$ as the
  domain. Since $x$ has to be compatible with these faces and degeneracies, we
  can conclude that $x_T$ provides the following bit of extra data:

  For any pair of admissible ordered sequences
  \begin{align*}
    \sigma &=\big((a_1,a_2),\ldots,(a_{n-1},a_n);(a_1,a_n)\big),\\
    \rho &=\big((a_i,b_2),(b_2,
    b_3),\ldots,(b_{m-1},a_{i+1});(a_i,a_{i+1})\big)
  \end{align*}
  and any $1\leq i\leq n$ a natural isomorphism
  \[
  \phi_{\sigma,\rho,i}\colon\Psi_{\sigma\circ_i\rho}\To
  \Psi_{\sigma}\circ_i\Psi_{\rho},
  \]
  There is one condition on these natural isomorphisms: in case $n=1$ or
  $m=1$, the corresponding natural isomorphism has to be the identity (it
  follows from the compatibility with degeneracies again).
\item[(3)] The degree 3 components of $x$ do not give rise to any extra data,
  but the dendroidal identities with face maps induce relations on the already
  existing one, corresponding to cases (4f) and (5f) of Subsection
  \ref{subsection:41}. Explicitly, the diagrams of functors
  \[
  \xymatrix@C+2mm@R+2mm{\Psi_{\sigma\circ_i\rho\circ_j\tau}\ar[d]_-\phi
    \ar[r]^-\phi &\Psi_{\sigma\circ_i\rho}\circ_j \Psi_\tau\ar[d]^-{\phi\circ_j\Psi}\\
    \Psi_\sigma\circ_i\Psi_{\rho\circ_j\tau}\ar[r]^-{\Psi\circ_i\phi}
    &\Psi_\sigma\circ_i\Psi_\rho\circ_j\Psi_\tau}
  \]
  \[
  \xymatrix@C+2mm@R+2mm{\Psi_{\sigma\circ_{i,j}(\rho,\tau)}\ar[d]_-\phi
    \ar[r]^-\phi &\Psi_{\sigma\circ_i\rho}\circ_j \Psi_\tau\ar[d]^-{\phi\circ_j\Psi}\\
    \Psi_{\sigma\circ_j\tau}\circ_i\Psi_\rho\ar[r]^-{ \phi\circ_i\Psi}
    &\Psi_\sigma\circ_{i,j}(\Psi_\rho,\Psi_\tau)}
  \]
  are commutative.
\end{enumerate*}
\subsubsection{Dendrices of shape $\ccor_1$}\label{bicat maps}
An element of $(\wCat^2)_{\ccor_1}$ consists of pairs $(f,y)$ where $f:A\To B$
is a map of sets and
\[
y:\Omega[\ccor_1]\To\coprod_{S\in\Sets} \underline{\dSets}(N_d(As_S),\wCat^1)
\]
has three relevant components:
\begin{align*}
  y_A&\in\dSets(N_d(As_A),\wCat^1),\\
  y_B&\in\dSets(N_d(As_B),\wCat^1),\\
  y_f&\in\dSets(N_d(As_A\otimes \Omega(\ccor_1)),\wCat^1).
\end{align*}
These three components are related by the compatibility condition of the
Gro\-then\-dieck construction in the following way. Let $\ccor_1$ be
represented by the tree
\[
\xy<0.08cm,0cm>: (0,0)*=0{}="1"; (0,8)*=0{\bullet}="2"; (0,16)*=0{}="3";
(2,3)*{\scriptstyle 0}; (2,13)*{\scriptstyle 1}; (-3,8)*{\scriptstyle v};
"1";"2" **\dir{-}; "2";"3" **\dir{-};
\endxy
\]
thus the set of colours of the operad $\Omega(\ccor_1)$ is $\{0,1\}$ and the
only non-trivial operation is $ v\in\Omega(\ccor_1)(1;0)$.  If
$\partial_{1},\partial_{0}\colon|\To \ccor_1$ denote the face maps sending $|$
to the leaf and root of $\ccor_1$ respectively then $\partial_{1}^*(y_f)=y_A$
and $\partial_{0}^*(y_f)=f^*(y_B)$. The components $y_A$ and $y_B$ were
described in the first part of Subsection \ref{bicategories}, hence we need to
describe $y_f$ only.

Let us recall first the operad $As_A\otimes \Omega(\ccor_1)$ in more
detail. The set of colours of this operad contains all pairs $({\mathfrak
  a},l)$ where ${\mathfrak a}=(a_1,a_2)\in A^2$ is a colour of $As_A$ and
$l\in\{0,1\}$ is a colour of $\Omega(\ccor_1)$. The operations are generated
by the following three types of basic ones:
\[
\xy<0.08cm,0cm>: (0,0)*=0{}="1"; (0,8)*{\circ}="2"; (0,16)*=0{}="3";
(4,2)*{\scriptstyle{({\mathfrak a},0)}}; (4,14)*{\scriptstyle{({\mathfrak
      a},1)}}; (-5,8)*{\scriptstyle{({\mathfrak a},v)}}; "1";"2" **\dir{-};
"2";"3" **\dir{-};
\endxy
\]
is a picture of the basic operation
$(\mathfrak{a},v)\in\Omega(\ccor_1)(({\mathfrak a},1);({\mathfrak a},0))$
induced by $v\in\Omega(\ccor_1)(1;0)$ for any $\mathfrak{a}=(a_1,a_2)\in A^2$,
and
\begin{equation}\label{basicop0}
  \xy<0.08cm,0cm>:
  (0,0)*=0{}="1";
  (0,10)*=0{\bullet}="2";
  (-20,20)*=0{}="3";
  (-10,20)*=0{}="4";
  (20,20)*=0{}="5";
  (4,2)*{\scriptstyle{({\mathfrak a},0)}};
  (-25,19)*{\scriptstyle{({\mathfrak a}_1,0)}};
  (-3,19)*{\scriptstyle{({\mathfrak a}_2,0)}};
  (25,19)*{\scriptstyle{({\mathfrak a}_n,0)}};
  (-5,9)*{\scriptstyle{(*,0)}};
  (1,14)*{{\cdots}};
  "1";"2" **\dir{-};
  "2";"3" **\dir{-};
  "2";"4" **\dir{-};
  "2";"5" **\dir{-};
  \endxy
\end{equation}
\begin{equation}\label{basicop1}
  \xy<0.08cm,0cm>:
  (0,0)*=0{}="1";
  (0,10)*=0{\bullet}="2";
  (-20,20)*=0{}="3";
  (-10,20)*=0{}="4";
  (20,20)*=0{}="5";
  (4,2)*{\scriptstyle{({\mathfrak a},1)}};
  (-25,19)*{\scriptstyle{({\mathfrak a}_1,1)}};
  (-3,19)*{\scriptstyle{({\mathfrak a}_2,1)}};
  (25,19)*{\scriptstyle{({\mathfrak a}_n,1)}};
  (-5,9)*{\scriptstyle{(*,1)}};
  (1,14)*{{\cdots}};
  "1";"2" **\dir{-};
  "2";"3" **\dir{-};
  "2";"4" **\dir{-};
  "2";"5" **\dir{-};
  \endxy
\end{equation}
are pictures of the basic operations in $As_A\otimes
\Omega(\ccor_1)((\mathfrak{a}_1,0),\ldots,(\mathfrak{a}_n,0);(\mathfrak{a},0))$
and $As_A\otimes
\Omega(\ccor_1)((\mathfrak{a}_1,1),\ldots,(\mathfrak{a}_n,1);(\mathfrak{a},1))$
respectively, induced by the unique operation
\[
*\in As_A((a_1,a_2),(a_2,a_3),\ldots,(a_n,a_{n+1});(a_1,a_{n+1)})
\]
where $\mathfrak{a}_i=(a_i,a_{i+1})\in A^2$ and $\mathfrak{a}=(a_1,a_{n+1})\in
A^2$. The operations generated this way are subject to the relations which
imply that the obvious projections $As_A\otimes \Omega(\ccor_1)\To As_A$,
$As_A\otimes \Omega(\ccor_1)\To \Omega(\ccor_1)$ are maps of operads, and to
the following relation (interchange law in the Boardman-Vogt tensor product
for operads):
\[
\xy<0.08cm,0cm>: (0,0)*{ \xy<0.08cm,0cm>: (0,0)*=0{}="1";
  (0,10)*=0{\bullet}="2"; (-20,20)*{\circ}="3"; (-10,20)*{\circ}="4";
  (20,20)*{\circ}="5"; (-20,30)*=0{}="6"; (-10,30)*=0{}="7"; (20,30)*=0{}="8";
  (4,2)*{\scriptstyle{({\mathfrak a},0)}}; (-15,13)*{\scriptstyle{({\mathfrak
        a}_1,0)}}; (-1,17)*{\scriptstyle{({\mathfrak a}_2,0)}};
  (15,13)*{\scriptstyle{({\mathfrak a}_n,0)}};
  (-25,29)*{\scriptstyle{({\mathfrak a}_1,1)}};
  (-26,20)*{\scriptstyle{({\mathfrak a}_1,v)}};
  (-5,29)*{\scriptstyle{({\mathfrak a}_2,1)}};
  (-4,20)*{\scriptstyle{({\mathfrak a}_2,v)}};
  (25,29)*{\scriptstyle{({\mathfrak a}_n,1)}};
  (26,20)*{\scriptstyle{({\mathfrak a}_n,v)}}; (-5,9)*{\scriptstyle{(*,0)}};
  (1,14)*{{\cdots}}; "1";"2" **\dir{-}; "2";"3" **\dir{-}; "2";"4" **\dir{-};
  "2";"5" **\dir{-}; "3";"6" **\dir{-}; "4";"7" **\dir{-}; "5";"8" **\dir{-};
  \endxy
}; (40,0)*{=}; (80,0)*{ \xy<0.08cm,0cm>: (0,-10)*=0{}="0"; (0,0)*{\circ}="1";
  (0,10)*=0{\bullet}="2"; (-20,20)*=0{}="3"; (-10,20)*=0{}="4";
  (20,20)*=0{}="5"; (4,-8)*{\scriptstyle{({\mathfrak a},0)}};
  (4,4)*{\scriptstyle{({\mathfrak a},1)}}; (-25,19)*{\scriptstyle{({\mathfrak
        a}_1,1)}}; (-3,19)*{\scriptstyle{({\mathfrak a}_2,1)}};
  (25,19)*{\scriptstyle{({\mathfrak a}_n,1)}}; (-5,9)*{\scriptstyle{(*,1)}};
  (-5,0)*{\scriptstyle{(\mathfrak{a},v)}}; (1,14)*{{\cdots}}; "1";"0"
  **\dir{-}; "1";"2" **\dir{-}; "2";"3" **\dir{-}; "2";"4" **\dir{-}; "2";"5"
  **\dir{-};
  \endxy
};
\endxy
\]
The properties of the operad $As_A\otimes \Omega(\ccor_1)$ imply
\begin{lem} \label{wcat2lem} For any ordered sequence
  $\sigma=\big((\mathfrak{a}_1,l_1),\ldots,
  (\mathfrak{a}_n,l_n);(\mathfrak{a},l)\big)$, the corresponding set of
  operations $As_A\otimes \Omega(\ccor_1)(\sigma)$ contains at most one
  element. Moreover, in case $As_A\otimes \Omega(\ccor_1)(\sigma)$ is not
  empty, $\mathfrak a_i=(a_i,a_{i+1})$ and $\mathfrak a=(a_1,a_{n+1})$ for
  some $a_1,\ldots,a_{n+1}\in A$.
\end{lem}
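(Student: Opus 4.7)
The plan is to address the two claims separately, starting with admissibility.

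For the admissibility of $(\mathfrak{a}_1,\dots,\mathfrak{a}_n;\mathfrak{a})$ in $As_A$, I will construct a projection operad map
\[
p\colon As_A\otimes\Omega(\ccor_1)\To As_A,
\]
obtained by applying the universal property of the Boardman--Vogt tensor product to the identity $\id_{As_A}$ and the operad map $\Omega(\ccor_1)\to As_A$ that sends both colours $0,1$ to a chosen base colour $(a_0,a_0)$ and the generator $v$ to the unit operation. On colours $p$ is the forgetful map $(\mathfrak{a},l)\mapsto \mathfrak{a}$, so a nonempty $(As_A\otimes\Omega(\ccor_1))(\sigma)$ produces a nonempty $As_A(\mathfrak{a}_1,\dots,\mathfrak{a}_n;\mathfrak{a})$; by the description of $As_A$ recalled at the beginning of Section \ref{section:3}, this forces $\mathfrak{a}_i=(a_i,a_{i+1})$ and $\mathfrak{a}=(a_1,a_{n+1})$.

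For the uniqueness statement I will exhibit a normal form. Observe that $\Omega(\ccor_1)$ has as its only non-identity operation $v\colon 1\to 0$, there is no operation from $0$ to $1$, and each operation set of $As_A$ is either empty or a singleton. The generators of $As_A\otimes\Omega(\ccor_1)$ are the $(*,l)$-operations and the unary $(\mathfrak{a},v)$. If the output level $l=1$, then the root generator of any expression for the operation must be a $(*,1)$, since the alternatives $(*,0)$ and $(\mathfrak{a},v)$ both output level $0$; inductively, every generator in the tree is a $(*,1)$, and the operation coincides with $\iota_1(\mu)$ for the unique admissible $\mu$. If $l=0$, I claim every operation equals the composite
\[
\iota_0(\mu)\circ(e_1,\dots,e_n),\qquad e_i=\begin{cases}(\mathfrak{a}_i,v) & l_i=1,\\ \id_{(\mathfrak{a}_i,0)} & l_i=0,\end{cases}
\]
which is completely determined by $\sigma$.

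The reduction to this normal form uses two rewrite moves on the tree of generators expressing an operation: associativity inside $\iota_0(As_A)$ and $\iota_1(As_A)$, which merges adjacent same-level generators; and the interchange relation displayed in Section \ref{section:2}, which absorbs a $(*,1)$-generator with $v$-operations above each of its inputs into a single $v$ followed by a $(*,0)$-generator, strictly lowering the number of $(*,1)$-generators. Iterating these, every $(*,1)$-generator is eventually pushed below the topmost layer of $v$'s and merged into the level-$0$ part, leaving only the forced $v$'s directly above the level-$1$ leaves and a single $(*,0)$-block near the root. The main obstacle will be making the rewriting argument rigorous: I would establish termination by assigning to each representative the pair (number of $(*,1)$-generators, total number of generators), ordered lexicographically, and checking that both moves strictly decrease this pair; local confluence reduces to a finite check on the overlap patterns of the two moves, and Newman's lemma then yields global confluence and uniqueness of the normal form.
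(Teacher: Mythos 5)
Your proof is correct and, for the main uniqueness claim, follows essentially the same route as the paper: both arguments split on the output level $l$ and, in the mixed case $l=0$ with some $l_i=1$, use the interchange relation to push every $(*,1)$-generator down into the level-$0$ part, arriving at the same canonical form (a single $(*,0)$-block with $(\mathfrak a_i,v)$'s sitting directly on the level-$1$ leaves). The paper simply asserts reducibility to this form and reads the admissibility of $(\mathfrak a_1,\ldots,\mathfrak a_n;\mathfrak a)$ off the resulting tree; you add an explicit termination measure and a confluence check, which is more than is strictly needed (since the irreducible expressions over a fixed signature $\sigma$ are already forced to coincide, termination alone suffices), but it does make the reduction rigorous where the paper is informal. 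The one step I would push back on is your construction of the projection $p\colon As_A\otimes\Omega(\ccor_1)\To As_A$: the Boardman--Vogt tensor product has no couniversal property turning a pair of maps $As_A\To As_A$ and $\Omega(\ccor_1)\To As_A$ into a map out of the tensor product, and with your choice (both colours of $\Omega(\ccor_1)$ sent to a fixed $(a_0,a_0)$) there is no way to recover the forgetful map $(\mathfrak a,l)\mapsto\mathfrak a$ on colours. The projection does exist --- define it directly on generators by $(*,l)\mapsto *$ and $(\mathfrak a,v)\mapsto\id_{\mathfrak a}\in As_A(\mathfrak a;\mathfrak a)$ and check the interchange relation, which is how the paper's preceding paragraph justifies it --- so your admissibility argument goes through, but the justification should be repaired (or the ``moreover'' clause simply read off from the normal form you have already established, as the paper does).
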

\begin{proof}
  When $l=1$, the only possibilities for the sequence $\sigma$ that give
  nonempty $As_A\otimes \Omega(\ccor_1)(\sigma)$ are the ones corresponding to
  (\ref{basicop1}). Each such set of operations contains exactly one element:
  $(*,1)$. If $l=0$ and all $l_i=0$, then again the only nonempty sets of
  operations are the ones corresponding to (\ref{basicop0}), with a unique
  operation $(*,0)$ in each.

  The remaining cases to study are the ones when $\sigma$ is such that $l=0$
  and there exists $i$ with $l_i=1$. Suppose that in such a case $As_A\otimes
  \Omega(\ccor_1)(\sigma)$ is not empty. The interchange law implies then that
  these operations can always be reduced to a form, which can be visualized by
  a tree resulting from iterated gluing of the following two types of
  operations:
  \[
  \xy<0.08cm,0cm>: (0,0)*{ \xy<0.08cm,0cm>: (0,0)*=0{}="1";
    (0,10)*=0{\bullet}="2"; (-20,20)*=0{}="3"; (-10,20)*=0{}="4";
    (20,20)*=0{}="5"; (4,2)*{\scriptstyle{({\mathfrak a},0)}};
    (-25,19)*{\scriptstyle{({\mathfrak a}'_1,0)}};
    (-3,19)*{\scriptstyle{({\mathfrak a}'_2,0)}};
    (25,19)*{\scriptstyle{({\mathfrak a}'_n,0)}};
    (-5,9)*{\scriptstyle{(*,0)}}; (1,14)*{{\cdots}}; "1";"2" **\dir{-};
    "2";"3" **\dir{-}; "2";"4" **\dir{-}; "2";"5" **\dir{-};
    \endxy
  }; (80,0)*{ \xy<0.08cm,0cm>: (0,0)*=0{}="1"; (0,10)*=0{\bullet}="2";
    (-20,20)*{\circ}="3"; (-10,20)*{\circ}="4"; (20,20)*{\circ}="5";
    (-20,30)*=0{}="6"; (-10,30)*=0{}="7"; (20,30)*=0{}="8";
    (4,2)*{\scriptstyle{({\mathfrak a},0)}};
    (-15,13)*{\scriptstyle{({\mathfrak a}_1,0)}};
    (-1,17)*{\scriptstyle{({\mathfrak a}_2,0)}};
    (15,13)*{\scriptstyle{({\mathfrak a}_n,0)}};
    (-25,29)*{\scriptstyle{({\mathfrak a}_1,1)}};
    (-26,20)*{\scriptstyle{({\mathfrak a}_1,v)}};
    (-5,29)*{\scriptstyle{({\mathfrak a}_2,1)}};
    (-4,20)*{\scriptstyle{({\mathfrak a}_2,v)}};
    (25,29)*{\scriptstyle{({\mathfrak a}_n,1)}};
    (26,20)*{\scriptstyle{({\mathfrak a}_n,v)}}; (-5,9)*{\scriptstyle{(*,0)}};
    (1,14)*{{\cdots}}; "1";"2" **\dir{-}; "2";"3" **\dir{-}; "2";"4"
    **\dir{-}; "2";"5" **\dir{-}; "3";"6" **\dir{-}; "4";"7" **\dir{-};
    "5";"8" **\dir{-};
    \endxy
  };
  \endxy
  \]
  where ``gluing'' means identifying the two roots indexed by
  $(\mathfrak{a},0)$. We can conclude that $\mathfrak a_i=(a_i, a_{i+1})$,
  $\mathfrak a=(a_1,a_{n+1})$ for some $a_1,\ldots,a_{n+1}\in A$ and
  $As_A\otimes \Omega(\ccor_1)(\sigma)$ contains again exactly one element.

  Let us illustrate the latter situation in a particular example, for instance
  when
  \[
  \sigma=\big((\mathfrak{a}_1,1), (\mathfrak{a}_2,0), (\mathfrak{a}_3,0),
  (\mathfrak{a}_4,1),(\mathfrak{a}_5,1); (\mathfrak{a},0)\big).
  \]
  Any operation in $As_A\otimes \Omega(\ccor_1)(\sigma)$ can be reduced to the
  form
  \[
  \xy<0.08cm,0cm>: (0,0)*=0{}="1"; (0,15)*=0{\bullet}="2";
  (-40,30)*{\circ}="3"; (-20,30)*=0{}="4"; (0,30)*=0{}="5";
  (20,30)*{\circ}="6"; (40,30)*{\circ}="7"; (-40,45)*=0{}="8";
  (20,45)*=0{}="9"; (40,45)*=0{}="10"; (4,2)*{\scriptstyle{({\mathfrak
        a},0)}}; (-45,43)*{\scriptstyle{({\mathfrak a}_1,1)}};
  (-26,21)*{\scriptstyle{({\mathfrak a}_1,0)}};
  (-20,32)*{\scriptstyle{({\mathfrak a}_2,0)}};
  (0,32)*{\scriptstyle{({\mathfrak a}_3,0)}};
  (15,43)*{\scriptstyle{({\mathfrak a}_4,1)}};
  (45,43)*{\scriptstyle{({\mathfrak a}_5,1)}};
  (7,25)*{\scriptstyle{({\mathfrak a}_4,0)}};
  (26,21)*{\scriptstyle{({\mathfrak a}_5,0)}};
  (-46,30)*{\scriptstyle{({\mathfrak a}_1,v)}};
  (26,30)*{\scriptstyle{({\mathfrak a}_4,v)}};
  (46,30)*{\scriptstyle{({\mathfrak a}_5,v)}}; (-5,12)*{\scriptstyle{(*,0)}};
  "1";"2" **\dir{-}; "2";"3" **\dir{-}; "2";"4" **\dir{-}; "2";"5" **\dir{-};
  "2";"6" **\dir{-}; "2";"7" **\dir{-}; "3";"8" **\dir{-}; "6";"9" **\dir{-};
  "7";"10" **\dir{-};
  \endxy
  \]
  Since the set of operations is assumed to be nonempty, $\mathfrak
  a_i=(a_i,a_{i+1})$ for all $1 \le i\le 6$ and $\mathfrak a= (a_1,a_6)$, and
  there is exactly one element in the set $As_A\otimes
  \Omega(\ccor_1)(\sigma)$ (that is the composite illustrated by the tree
  above).
\end{proof}

We can use this discussion about the operad $As_A\otimes \Omega(\ccor_1)$ to
understand the $y_f$ component of an element $y\in \wCat^2_{\ccor_1}$.  The
degree $0$ component of $y_f$ consists of the map of sets
\[(y_f)_|:N_d(As_A\otimes \Omega(\ccor_1))_| \To (\wCat^1)_|,
\]
thus $(y_f)_|$ amounts to choosing categories $\C_1(a_1,a_2)$ and
$\C_0(a_1,a_2)$ for every pair $(a_1,a_2)\in A^2$. The compatibility condition
in the Grothendieck construction implies that these categories are the
corresponding ones appearing in the definitions of $y_A$ and
$y_B$. Explicitly,
\begin{align*}
  \C_1(a_1,a_2)&=\mathcal{A}(a_1,a_2)\textnormal{ of $(y_A)_|$,}\\
  \C_0(a_1,a_2)&=\mathcal{B}(f(a_1),f(a_2))\textnormal{ of $(y_B)_|$.}
\end{align*}
Lemma \ref{wcat2lem} implies that $y_f$ in degree 1 amounts to the choice of a
functor
\[
\Psi_\sigma\colon\C_{l_1}(a_1,a_2)\times\cdots \times
\C_{l_{n-1}}(a_{n-1},a_n)\To \C_{l}(a_1,a_n)
\]
for every ordered sequence
\[
\sigma=\big(((a_1,a_2),l_1),((a_2,a_3),l_2),\ldots,((a_{n-1},a_n),l_{n-1});((a_1,a_n),l)\big)
\]
of objects of $As_A\otimes\Omega(\ccor_1)$. A particular case of such a
functor is
\[
\Psi_\sigma\colon \mathcal{A}(a_1,a_2)\To \mathcal{B}(f(a_1),f(a_2)).
\]
As a consequence of the compatibility conditions of the Grothendieck
construction, in case $l=l_i=1$ or $l=l_i=0$ for all $i$, one gets back the
corresponding functors $\Psi^A$ or $\Psi^B$ respectively, resulting from $y_A$
or $y_B$ (we described them in Subsection \ref{bicategories}).

The data contained in degree 2 of $y_f$ amounts to choices of natural
transformations $\phi\colon\Psi_\sigma\circ \Psi_\rho\To
\Psi_{\sigma\circ\rho}$, and similarly, the information contained in degree 3
can be described with the same semantics as in the description of $\wCat^2_|$.

\subsubsection{The relation between bicategories and dendroidal\\ weak
  2-categories}
In this Subsection we aim to establish a relation between dendroidal weak
2-categories and (unbiased) bicategories. Let us recall first that the
homotopy category of an inner Kan complex $X$ in simplicial sets is the
category $ho(X):=\tau(X)$, where $\tau$ is the left adjoint of the nerve
functor $N\colon\Cat\To s\Sets$. The category $ho (X)$ is defined as follows:
\begin {itemize*}
\item[--] the objects of $ho (X)$ are the elements of $X_0$;
\item[--] the arrows of $ho (X)$ are equivalence classes of elements of $X_1$,
  where the equivalence relation is left homotopy: if $a,b\in X_0$ and $f,g
  \colon a\To b$ are elements of $X(a,b)\subseteq X_1$, we say that $f$ is
  left homotopic to $g$ if there exists an element of $X_2$ that fills the
  triangle
  \[
  \xy<0.08cm,0cm>: (0,0)*+{\scriptstyle {a}}="2"; (30,0)*+{\scriptstyle
    {b}}="3"; (10,15)*+{\scriptstyle a}="1"; (15,-2)*{\scriptstyle g};
  (21,9)*{\scriptstyle f}; (3,9)*{\scriptstyle {1_a}}; {\ar@{=}"1";"2"};
  {\ar"2";"3"}; {\ar"1";"3"};
  \endxy
  \]
\end{itemize*}
\begin{rem}
  We could have chosen to define the equivalence relation above with the dual
  notion of right homotopy, since $X$ being an inner Kan complex implies that
  these two notions are the same. In fact, this is the only reason we renamed
  the category $\tau(X)$: the description of this category is much easier when
  $X$ is an inner Kan complex.
\end{rem}
The restriction functor $i^*\colon d\Sets\To s\Sets$ preserves inner Kan
complexes, hence it makes sense to talk about the category $ho (i^*(\wCat^2))$
(we will call it {\em the category of dendroidal weak 2-categories}). Our goal
is to compare this category with $u\biCat$.

The description of the elements of $\wCat^2_|$ in Subsection
\ref{bicategories} can be summarized in
\begin{prop}\label{bicatcomp0}
  The objects of the category $ho (i^*(\wCat^2))$ are in one-to-one
  correspondence with the objects of $u\biCat$.
\end{prop}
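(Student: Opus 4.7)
The plan is to exhibit an explicit bijection between the set $(\wCat^2)_|$ and the class of unbiased bicategories; together with the general fact that the homotopy category of an inner Kan simplicial set has its set of objects equal to its set of $0$-simplices, together with $(i^*(\wCat^2))_0 = (\wCat^2)_|$, this yields the proposition.

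To construct the correspondence, I would read off the data of a dendrex $x \in (\wCat^2)_|$ term by term from the analysis already carried out in Subsection \ref{bicategories}. The degree $0$ part of $x$ supplies a set $A$ and hom-categories $\mathcal{A}(a_1,a_2)$; the degree $1$ part supplies an $n$-ary composition functor $\Psi_\sigma$ for every admissible sequence $\sigma$, the unit-selecting functors $\Psi_a \colon * \To \mathcal{A}(a,a)$ appearing in the $n=0$ case and the identity functor being forced by degeneracy compatibility in the $n=1$ case; the degree $2$ part supplies natural isomorphisms $\phi_{\sigma,\rho,i} \colon \Psi_{\sigma \circ_i \rho} \To \Psi_\sigma \circ_i \Psi_\rho$, required to be identities when $\sigma$ or $\rho$ is unary. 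This is exactly the data of an unbiased bicategory with object set $A$.

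I would then match the two commutative diagrams appearing in degree $3$, coming from the tree shapes $\ccor_n \circ_i (\ccor_m \circ_j \ccor_k)$ and $\ccor_n \circ_{i,j} (\ccor_m, \ccor_k)$, with the coherence axioms of an unbiased bicategory recalled in the Appendix. Since $\wCat^2$ is $3$-coskeletal by Theorem \ref{gencoskel2}, no further data or relations arise in higher degrees, in agreement with the fact that unbiased bicategories are axiomatised by coherence at this one level only.

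The main obstacle I expect is the bookkeeping in this last step: verifying that the two pentagon-type diagrams of degree-$3$ dendrices translate precisely into the coherence conditions as phrased in the Appendix, and that between them they generate all of the required relations and no spurious ones. Once this check is complete, the inverse assignment sending an unbiased bicategory to its associated dendrex is defined by the same formulas read in reverse, and both composites are the identity by construction, giving the desired bijection on objects.
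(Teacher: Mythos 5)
Your proposal is correct and follows essentially the same route as the paper: the paper offers no separate argument beyond stating that the proposition ``summarizes'' the unpacking of $(\wCat^2)_|$ carried out in the subsection on dendrices of shape $|$, which is precisely the degree-by-degree reading (together with $3$-coskeletality and the identification of objects of the homotopy category with $0$-simplices) that you describe. Your explicit attention to matching the degree-$3$ diagrams with the coherence axioms and to the strict-unit condition forced by degeneracies is, if anything, more careful than the paper's own treatment.
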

Let us turn to the comparison of morphisms of the categories in question. By
Proposition \ref{bicatcomp0} we have defined a functor $\Phi\colon u\biCat\To
ho (i^*(\wCat^2))$ on the objects. We complete the definition of $\Phi$ by
\begin{prop}\label{bicatcomp1}
  For any $\mathbb{A}, \mathbb{B}\in u\biCat$ there is a one-to-one
  correspondence between the hom-sets $u\biCat(\mathbb{A},\mathbb{B})$ and $ho
  (i^*(\wCat^2))(\Phi(\mathbb{A}),\Phi(\mathbb{B}))$. This correspondence is
  functorial.
\end{prop}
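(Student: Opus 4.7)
The plan is to establish the correspondence in two stages: first exhibit a bijection between dendrices of shape $\ccor_1$ with prescribed boundary $(\Phi(\mathbb{A}),\Phi(\mathbb{B}))$ and unbiased homomorphisms $\mathbb{A}\to\mathbb{B}$, and then check that this bijection passes to left-homotopy classes and is compatible with composition. Most of the groundwork is already laid by the explicit unpacking of $\wCat^2_{\ccor_1}$ in the preceding subsection, and rigidity comes from Theorem~\ref{gencoskel2}.

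First, I would use the analysis of $(f,y)\in\wCat^2_{\ccor_1}$ just above to extract exactly the data of a homomorphism of unbiased bicategories in the sense of the Appendix: a function on objects, a local functor $\mathcal{A}(a,a')\to\mathcal{B}(f(a),f(a'))$ for each pair of objects, and, for every mixed admissible signature, a coherence $2$-cell. I would match the compatibility diagrams of degree~$3$ (cases (4f) and (5f) in Subsection~\ref{subsection:41}) to the coherence axioms for unbiased homomorphisms. In the reverse direction, given an unbiased homomorphism $F\colon\mathbb{A}\to\mathbb{B}$, I would spread the data across all admissible mixed signatures, setting the functor $G$ of case (3c) equal to the expected composite and the natural isomorphism $\phi$ equal to $F$'s comparison $2$-cell; the dendroidal compatibility is then routine. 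By $3$-coskeletality of $\wCat^2$ no further data in degrees $\ge 4$ needs to be matched.

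Next, I would address the homotopy relation. A left-homotopy between two dendrices $y,y'\in\wCat^2_{\ccor_1}$ with the same boundary is, after restriction along $i^*$, a dendrex of shape $L_2=\ccor_1\circ_1\ccor_1$ two of whose $1$-faces are $y,y'$ and whose third face is the degenerate edge on $\Phi(\mathbb{A})$. Unpacking this using the description of $\wCat^2_{\ccor_n\circ_i\ccor_m}$ from Subsection~\ref{subsection:41}, the degeneracy forces the natural isomorphisms attached to the degenerate edge to be identities; the coherence cells then compel the local functors and comparison $2$-cells of $y$ and $y'$ to agree, so left-homotopy reduces to equality of the underlying unbiased homomorphisms. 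This is the step I expect to be the main obstacle: carefully tracking how the normalization of one edge against an identity pins down equality rather than a mere isomorphism of homomorphisms.

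Finally, for functoriality, I would pick a filler of an inner horn of $L_2$ representing the composition of two morphism classes, and check directly from the description of $\wCat^2_{L_2}$ that its middle face represents the composite homomorphism $G\circ F$. That identity homomorphisms correspond to degenerate $1$-simplices follows from the construction of the bijection in the first step. Combined with the uniqueness established in the second step, this shows that the induced map on hom-sets is a well-defined bijection respecting composition, completing the definition of $\Phi$ as a functor.
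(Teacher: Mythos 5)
There is a genuine gap in your first step, and it propagates through the rest of the argument. You claim a \emph{bijection} between dendrices of shape $\ccor_1$ with boundary $(\Phi(\mathbb{A}),\Phi(\mathbb{B}))$ and unbiased homomorphisms $\mathbb{A}\To\mathbb{B}$. No such bijection exists: by Lemma \ref{wcat2lem} and the description of $y_f$ in degree $1$, a dendrex must choose a functor $\Psi_\sigma$ for \emph{every} mixed admissible signature of $As_A\otimes\Omega(\ccor_1)$ (for instance a functor $K\colon\mathcal{A}(a_1,a_2)\times\mathcal{A}(a_2,a_3)\To\mathcal{B}(f(a_1),f(a_3))$ for the signature with $l_1=l_2=1$, $l=0$), and the degree-$2$ data only constrains $K$ up to natural isomorphism with the composite $\Psi^B\circ(F_{12}\times F_{23})$. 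So the passage from dendrices to homomorphisms is many-to-one, and the quotient by left homotopy is essential, not a bookkeeping step. Your second step is then internally inconsistent with the first: if dendrices were in bijection with homomorphisms and a homotopy forced the extracted homomorphisms to be equal, the homotopy relation on these hom-sets would be discrete, which it is not.

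What the paper actually does, and what your proposal is missing, is the surjectivity-via-homotopy argument: starting from an arbitrary dendrex $y$, one extracts a homomorphism $(F,f)$ by pasting the degree-$2$ and degree-$3$ coherence cells of $y_f$ (the composite $\alpha\circ(\alpha_2\cdot\beta_1)$ in the paper's notation), forms the \emph{canonical} dendrex $y^F$ in which all mixed functors are literal composites, and then explicitly constructs a dendrex of shape $\ccor_1\circ_1\ccor_1$ with faces $\id_{\Phi(\mathbb{A})}$, $y$ and $y^F$ witnessing that $y$ and $y^F$ are left homotopic. Your direction of the homotopy analysis (a homotopy forces the extracted local functors and comparison cells to agree) is the ingredient for injectivity and is plausible, and your functoriality sketch via inner horn fillers of $L_2$ is fine; but without the construction of the homotopy $y\simeq y^F$ the surjectivity of $\Phi$ on hom-sets is not established, and that is the substantive content of the proposition.
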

\begin{proof}
  Suppose that $(F,f)\colon \mathbb{A}\To\mathbb{B}$ is a map of unbiased
  bicategories (with strict unit). With the use of the description of
  $\wCat^2_{\ccor_1}$ we gave in Subsection \ref{bicategories} first we define
  an $y^F\in\wCat^2_{\ccor_1}$, induced by $(F,f)$. Recall, that such an $y^F$
  is determined by three components, and the components $y^F_A, y^F_B$ are
  obvious.

  Let us define the component $y^F_f\colon N_d(As_A)\otimes
  \Omega[\ccor_1]\To\wCat^1$. In degree $0$ again the definition of $y^F_f$ is
  obvious, the first non-trivial choices we have to make arise in degree 1.
  We need to give the components $(y^F_f)_{\ccor_n}$ for every $n\ge 1$,
  i.e. a functor
  \[
  \Psi_\sigma\colon\C_{l_1}(a_1,a_2)\times \C_{l_2}(a_2,a_3)\times\cdots\times
  \C_{l_{n-1}}(a_{n-1},a_n)\To \mathcal{B}(f(a_1),f(a_n))
  \]
  for every ordered sequence $\sigma$ with $a_i\in A$, $l_i\in\{0,1\}$, where
  \[
  \C_{l_i}(a_i,a_{i+1})=
  \begin{cases}
    \mathcal{A}(a_i,a_{i+1}) & \text{if} \quad l_i=1,\\
    \mathcal{B}(f(a_i),f(a_{i+1})) &\text{if} \quad l_i=0.
  \end{cases}
  \]
  Define this functor to be the composite
  \[
  \xymatrix@C+4mm{\C_{l_1}(a_1,a_2)\times\cdots\times \C_{l_{n-1}}(a_{n-1},a_n)\ar[d]_{G}\ar@{.>}[rd]\\
    \mathcal{B}(f(a_1),f(a_2))\times\cdots\times
    \mathcal{B}(f(a_{n-1}),f(a_n))\ar[r]_-{\Psi^B_\sigma} &
    \mathcal{B}(f(a_1),f(a_n))}
  \]
  where $G$ is the product of the functors
  $\xymatrix@1{\C_{l_i}(a_i,a_{i+1})\ar[r] &\mathcal{B}(f(a_i),f(a_{i+1}))}$
  that are either $F_{(a_i,a_{i+1})}$ or identity, depending on $l_i$. These
  choices define $y^F_f$ in degree 1.

  The next step is to give the components of $y^F_f$ in degree 2, that is we
  have to give natural isomorphisms $\Psi_\sigma\circ\Psi_\rho\To
  \Psi_{\sigma\circ\rho}$. It is a straightforward computation to see that for
  any such natural isomorphism we have exactly one choice: some pasting of the
  invertible 2-cells in the data defining the map of bicategories $(F,f)$, and
  any such pasting is unique due to the coherence conditions on $(F,f)$. As a
  consequence, the relations imposed in degree 3 for $y^F_f$ are also
  satisfied, hence $y^F$ is indeed an element of
  $\wCat^2(\Phi(\mathbb{A}),\Phi(\mathbb{B}))\subseteq \wCat^2_{\ccor_1}$.

  This far we have constructed a map of sets $\Phi$:
  \[
  \xymatrix{u\biCat(\mathbb{A},\mathbb{B})\ar[r]
    &\wCat^2(\Phi(\mathbb{A}),\Phi(\mathbb{B}))\ar[r] & ho
    (i^*(\wCat^2))(\Phi(\mathbb{A}),\Phi(\mathbb{B}))}
  \]
  that is clearly functorial. We still need to show that $\Phi$ is surjective
  and injective.

  To treat surjectivity, for any $y\in
  \wCat^2(\Psi(\mathbb{A}),\Psi(\mathbb{B}))\subseteq\wCat^2_{\ccor_1}$ we
  construct a homomorphism of unbiased bicategories $(F,f)\colon \mathbb{A}\To
  \mathbb{B}$ such that the associated $y^F$ described above will be in the
  class of $y$ in the homotopy category. For any such $y$ it is
  straightforward how to get the map of sets $f\colon A\To B$ and the functors
  $F_{(a_1,a_2)}\colon \mathcal{A}(a_1,a_2)\To \mathcal{B}(f(a_1),f(a_2))$,
  hence we only need to construct the natural isomorphisms for the data
  defining $(F,f)$, displayed in diagram (\ref{ubicathom}). We will discuss
  only the case $n=2$, the general case can be treated analogously. These
  natural isomorphisms are obtained as the composite of two natural
  isomorphisms given by the degree 2- and 3 data in $y_f$:
  \begin{itemize*}
  \item[(a)] The first natural isomorphism is the one in in
    $(y_f)_{\ccor_1\circ\ccor_2}$:
    \[
    \xymatrix{\mathcal{A}(a_1,a_2)\times \mathcal{A}(a_2,a_3)\ar[r]^-{\Psi^A}\ar[rd]_-{K} & \mathcal{A}(a_1,a_3)\ar[d]^{F_{13}}\\
      \ar@{}|>>>{\Uparrow\alpha}[ru]&\mathcal{B}(f(a_1),f(a_2))}
    \]
  \item[(b)] The second natural isomorphism comes from a pasting diagram,
    induced by the data in $(y_f)_{\ccor_2\circ(\ccor_1,\ccor_1)}$:
    \[
    \xymatrix@C-6mm@R+4mm{&&\mathcal{B}\times\mathcal{B}\ar[dd]^{\Psi^B}\\
      &\mathcal{B}(f(a_1),f(a_2))\times \mathcal{A}(a_2,a_3)\ar[ru]^-{\id\times F_{23}}\ar@{.>}[rd]^-H&\ar@{}[l]|-{\Uparrow\alpha_2}\\
      \mathcal{A}(a_1,a_2)\times \mathcal{A}(a_2,a_3)\ar[ru]^-{F_{12}\times \id}\ar[rr]^-K\ar[rd]_-{\id\times F_{23}}&\ar@{}[u]|{\Uparrow\beta_1}\ar@{}[d]|{\Downarrow\beta_2}&\mathcal{B}(f(a_1),f(a_3))\\
      &\mathcal{A}(a_1,a_2)\times \mathcal{B}(f(a_2),f(a_3))\ar@{.>}[ru]^-G\ar[rd]_-{F_{12}\times \id}&\ar@{}[l]|-{\Downarrow\alpha_1}\\
      &&\mathcal{B}\times \mathcal{B}\ar[uu]_{\Psi^B}}
    \]
    Note that the coherence conditions in
    $(y_f)_{\ccor_2\circ(\ccor_1,\ccor_1)}$ imply
    $\alpha_2\cdot\beta_1=\alpha_1\cdot\beta_2$ as natural isomorphisms
    $K\Rightarrow \Psi^B\circ(F_{12}\times F_{23})$.
  \end{itemize*}
  We can set the required natural isomorphism for the data in the homomorphism
  $(F,f)$ to be $\alpha\circ(\alpha_2\cdot\beta_1)$. The coherence conditions
  in the degree 3 components of $y_f$ imply that
  $(F,f)\colon\mathbb{A}\To\mathbb{B}$ is indeed a homomorphism of unbiased
  bicategories (with strict unit). The associated $y^F\in
  \wCat^2(\Phi(\mathbb{A}),\Phi(\mathbb{B}))$ is homotopic to $y$ since we can
  construct an element in $\wCat^2_{\ccor_1\circ\ccor_1}$ with faces
  $id_{\Phi(\mathbb{A})}, y$ and $y^F$. Hence the function $\Phi\colon
  u\biCat(\mathbb{A},\mathbb{B})\To
  ho(i^*\wCat^2)(\Phi(\mathbb{A},\mathbb{B})$ is surjective as well. This
  construction shows that $\Phi$ is injective as well and the proof is
  finished.
\end{proof}
Propositions \ref{bicatcomp0} and \ref{bicatcomp1} imply immediately
\begin{thm}\label{bicatmain}
  The categories $u\biCat$ and $ho(i^*(\wCat^2))$ are isomorphic. Hence the
  category of classical bicategories is equivalent to the category of
  dendroidal weak 2-categories.
\end{thm}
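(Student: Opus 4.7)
The plan is that this theorem is essentially a formal consequence of the two preceding propositions and requires no further substantive construction. First I would appeal to Proposition \ref{bicatcomp0}, which already exhibits $\Phi$ as a bijection between the object class of $u\biCat$ and the object class of $ho(i^*(\wCat^2))$. Next, Proposition \ref{bicatcomp1} extends $\Phi$ to a functor and establishes, for every pair $\mathbb{A},\mathbb{B}\in u\biCat$, a bijection
\[
\Phi\colon u\biCat(\mathbb{A},\mathbb{B})\To ho(i^*(\wCat^2))(\Phi(\mathbb{A}),\Phi(\mathbb{B}))
\]
that is natural in both variables. Taken together these two statements assert exactly that $\Phi$ is a bijective-on-objects, fully faithful functor, hence an isomorphism of categories, which is the first assertion of the theorem.

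For the second assertion I would invoke the classical comparison between the biased and unbiased notions of bicategory recalled in the Appendix: the forgetful/comparison functor $u\biCat\To\biCat$ is an equivalence of categories. Composing this equivalence with the isomorphism $\Phi$ constructed above yields an equivalence between the category of classical bicategories and the category $ho(i^*(\wCat^2))$ of dendroidal weak $2$-categories, as claimed.

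There is no main obstacle here; all of the real work has already been done. The combinatorial bookkeeping of matching objects, $1$-cells, $2$-cells, compositors and the associated coherence axioms against the degree $\le 3$ strata of $\wCat^2$ was absorbed into the proofs of Propositions \ref{bicatcomp0} and \ref{bicatcomp1}, and $3$-coskeletality of $\wCat^2$ (Theorem \ref{gencoskel2}) guarantees that nothing is lost in higher degrees. The present proof is therefore simply the remark that these two propositions assemble into an isomorphism of categories, together with a citation to the Appendix to replace $u\biCat$ by $\biCat$ up to equivalence.
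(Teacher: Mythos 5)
Your proposal matches the paper's own argument exactly: the paper derives the theorem as an immediate consequence of Propositions \ref{bicatcomp0} and \ref{bicatcomp1} (bijective on objects plus functorial bijections on hom-sets gives an isomorphism of categories), and then uses the chain of fully faithful embeddings of equivalent categories $\biCat\subseteq u\biCat\subseteq \overline{u\biCat}$ from the Appendix to pass to classical bicategories. Nothing is missing; your write-up simply makes explicit the one-line deduction the paper leaves implicit.
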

To conclude this Section, we conjecture that the following, stronger statement
is true:
\begin{conj}
  The inclusion of simplicial sets $N(u\biCat)\To i^*(\wCat^2)$ is a weak
  equivalence in the Joyal model structure on $s\Sets$.
\end{conj}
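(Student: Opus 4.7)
The plan is to prove the inclusion is a categorical equivalence of quasi-categories; by Joyal's theorem this is equivalent to being a weak equivalence in the Joyal model structure. Both source and target are quasi-categories: $N(u\biCat)$ trivially so, and $i^*(\wCat^2)$ because $\hcNd(\Ctg)$ is an inner Kan complex by Proposition \ref{ittay3} (taking $P=\Ctg$, locally fibrant in $\E$), the Grothendieck construction preserves the inner Kan condition by Theorem \ref{ittay2}, and an induction on $n$ shows $\wCat^n$ is inner Kan; the restriction $i^*$ then preserves this property. Essential surjectivity on 0-simplices follows from Proposition \ref{bicatcomp0}, so the whole content of the conjecture lies in full faithfulness.

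Full faithfulness means that for every pair $\mathbb{A},\mathbb{B}$ of unbiased bicategories, the induced map of mapping Kan complexes in $N(u\biCat)$ and $i^*(\wCat^2)$ is a weak equivalence. Since the source mapping space is the discrete set $u\biCat(\mathbb{A},\mathbb{B})$, the claim reduces to showing the target mapping space (the simplicial subset of $i^*(\wCat^2)^{\Delta^1}$ with endpoints $\Phi\mathbb{A}$ and $\Phi\mathbb{B}$) is homotopically discrete with $\pi_0$ equal to $u\biCat(\mathbb{A},\mathbb{B})$. The $\pi_0$-computation is essentially Proposition \ref{bicatcomp1}, so the remaining content is contractibility of each connected component. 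Here 3-coskeletality of $\wCat^2$ (Theorem \ref{gencoskel2}) is a crucial simplifying input, since it restricts the analysis to linear dendrices $L_n$ with $n\le 3$; simplices beyond dimension three are forced by their boundary.

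The concrete plan is to construct a simplicial deformation retraction of the mapping space onto its discrete component. The proof of Proposition \ref{bicatcomp1} already produces, for each $y\in\wCat^2_{\ccor_1}$ over $(\mathbb{A},\mathbb{B})$, a preferred rigidification $y^F$ coming from a genuine homomorphism $(F,f)\colon\mathbb{A}\to\mathbb{B}$ of unbiased bicategories, together with a canonical linear dendrex of shape $\ccor_1\circ\ccor_1$ witnessing them as homotopic. I would extend this assignment functorially to higher simplices: given an $n$-simplex of the mapping space, manufacture a canonical $(n{+}1)$-simplex exhibiting a homotopy to the constant simplex at $y^F$, by iterated pasting of the coherence 2-cells $\phi_{\sigma,\rho,i}$ from the dendrex data, with uniqueness of the pasting guaranteed by the coherence axioms for unbiased bicategories (recalled in the Appendix) and by 3-coskeletality beyond dimension three.

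The main obstacle is organizing these pastings uniformly across all relevant dendrex shapes and verifying the simplicial identities that assemble them into a bona fide deformation retraction, rather than a shape-by-shape construction as in the proof of Proposition \ref{bicatcomp1}. In effect this requires proving a coherence theorem for dendroidal weak 2-categories strong enough to collapse the higher simplicial structure of the mapping spaces onto $\pi_0$, and it is plausibly the combinatorial cost of such a coherence result that leaves the statement at the level of a conjecture.
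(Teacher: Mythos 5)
The statement you are addressing appears in the paper only as a conjecture; the paper gives no proof of it, so the only things to compare your argument against are the partial results the paper does establish (Propositions \ref{bicatcomp0} and \ref{bicatcomp1}, Theorem \ref{bicatmain}). Your reduction is the natural one and correctly isolates where the difficulty lies: essential surjectivity is Proposition \ref{bicatcomp0}, the $\pi_0$-computation on mapping spaces is Proposition \ref{bicatcomp1}, and what remains is precisely that each component of each mapping space of $i^*(\wCat^2)$ is weakly contractible. But that remaining step \emph{is} the content of the conjecture, and your proposal does not prove it: the ``simplicial deformation retraction onto the discrete component'' is announced, not constructed. You do not define the retraction on the higher simplices of the mapping space, you do not verify the simplicial identities that would make it a retraction, and you do not prove the coherence statement (uniqueness of the pastings of the $\phi_{\sigma,\rho,i}$ across all relevant shapes) that you invoke to make the construction well defined. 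Since you yourself flag this as the ``main obstacle,'' what you have is a plausible strategy consistent with why the paper leaves the statement as a conjecture, not a proof.

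Two further points. First, your claim that $i^*(\wCat^2)$ is a quasi-category needs more than Theorem \ref{ittay2} and Proposition \ref{ittay3}: the inductive step requires that $\underline{\dSets}(N_d(As_A),\wCat^1)$ be an inner Kan complex whenever $\wCat^1$ is, which is a statement about the internal hom for the Boardman--Vogt tensor product; it is true (by results of Moerdijk and Weiss on normal monomorphisms), but it is not among the facts recorded in this paper and must be supplied. Second, $3$-coskeletality (Theorem \ref{gencoskel2}) does cut the problem down to low dimensions, but it only guarantees that the homotopy groups of the mapping spaces vanish above a fixed degree; it says nothing about $\pi_1$ and $\pi_2$ of the components, and killing those is exactly where the uniform coherence theorem for dendroidal weak $2$-categories is needed. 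Until that is carried out, the gap between Theorem \ref{bicatmain} (an isomorphism after passing to $ho$) and the conjectured categorical equivalence remains open.
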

\appendix
\section{Notions of bicategories} \label{section:bicat} The notion of
bicategory first appeared explicitly in the paper of B\'enabou
\cite{benabou}. Intuitively, bicategories are generalised categories where the
composition of arrows is not strictly associative, only up to some coherent
2-cells which are part of the structure. The theory of bicategories had a
quick development, due to the usefulness of the notion in different
fashionable areas of mathematics. Amongst these areas we can find ordinary
category theory: as Ross Street states in \cite{street}, many fundamental
constructions of categories are bicategorical in nature. Bicategories can be
considered as generalisations of monoidal categories as well, giving new
insight to the theory of monoidal categories. Another area where bicategories
were influential is algebraic topology, especially higher homotopy theory:
bicategories are the first step in the build-up of higher categories and
groupoids, which should provide algebraic models of homotopy $n$-types.
\subsection{Classical bicategories}
A bicategory $\mathbb{A}$ consists of the following data and axioms:
\begin{itemize*}
\item[(D1)] a set $A$, called the set of objects or 0-cells;
\item[(D2)] for every ordered pair of objects $(a_1,a_2)\in A\times A$ a
  category $\mathcal{A}(a_1,a_2)$. The objects of such a category are called
  arrows or 1-cells of $\mathbb{A}$, the maps are called 2-cells of
  $\mathbb{A}$. If $f,g\in \mathcal{A}(a_1,a_2)$ are 1-cells and
  $\phi\in\mathcal{A}(a_1,a_2)(f,g)$ is a 2-cell between them, we usually
  depict this situation as $f\stackrel{\phi}{\Longrightarrow} g$ or as
  \[
  \xy (-15,0)*+{a_1}="4"; (15,0)*+{a_2}="6"; {\ar@/^1.65pc/^{f} "4";"6"};
  {\ar@/_1.65pc/_{g} "4";"6"}; {\ar@{=>}^<<<{\ \scriptstyle\phi}
    (0,2)*{};(0,-2)*{}} ;
  \endxy
  \]
  The composition of 2-cells in a category $\mathcal{A}(a_1,a_2)$ is called
  vertical composition and for two composable 2-cells $\phi, \phi'$ the
  composite is denoted by juxtaposition: $\phi\phi'$.
\item[(D3)] functors which define horizontal composition and units in
  $\mathbb{A}$:
  \begin{itemize*}
  \item [(D3a)] for all $(a_1,a_2,a_3)\in A^3$,
    $\psi\colon\mathcal{A}(a_1,a_2)\times\mathcal{A}(a_2,a_3)\To
    \mathcal{A}(a_1,a_3)$. We denote by ``$\cdot$'' the horizontal composite
    of two 1-cells (and two 2-cells), thus $g\cdot f:=\Psi(f,g)$ etc.
  \item [(D3b)] for all $a\in A$, $\psi_0\colon *\To \mathcal{A}(a,a)$, that
    is a 1-cell $\Id_a\in\mathcal{A}(a,a)$.
  \end{itemize*}
\item[(D4a)] natural isomorphisms, relating the two different ways of
  horizontal compositions of three 1-cells in $\mathbb{A}$: for all
  $a_1,a_2,a_3,a_4\in A$
  \[
  \xymatrix@R+3mm@C+8mm{\mathcal{A}(a_1,a_2)\times \mathcal{A}(a_2,a_3)\times \mathcal{A}(a_3,a_4)\ar[r]^-{\id\times \psi}\ar[d]_{\psi\times\id} & \mathcal{A}(a_1,a_2)\times \mathcal{A}(a_2,a_4)\ar[d]^\psi\\
    \mathcal{A}(a_1,a_3)\times
    \mathcal{A}(a_3,a_4)\ar@{}[ru]_-{\Uparrow\alpha}\ar[r]^-\psi
    &\mathcal{A}(a_1,a_4)}
  \]
  that is, invertible 2-cells $(h\cdot g)\cdot
  f\stackrel{\alpha}{\Longrightarrow}h\cdot (g\cdot f)$ in $\mathbb{A}$ for
  any three composable 1-cells $f,g,h$.
\item[(D4b)] natural isomorphisms, relating composition with units to the
  identity: for all $a_1,a_2\in A$,
  \[
  \xymatrix@C+3mm@R+3mm{\mathcal{A}(a_1,a_2)\times *\ar[r]^-{\id\times\psi_0}\ar@{=}[dr]&\mathcal{A}(a_1,a_2)\times \mathcal{A}(a_2,a_2)\ar[d]^-\psi\ar@{}[ld]^<<<<{\stackrel{\lambda}{\Leftarrow}}\\
    &\mathcal{A}(a_1,a_2)}
  \]
  \[
  \xymatrix@C+3mm@R+3mm{{*}\times\mathcal{A}(a_1,a_2)\ar[r]^-{\psi_0\times\id}\ar@{=}[dr]&\mathcal{A}(a_1,a_1)\times \mathcal{A}(a_1,a_2)\ar[d]^\psi\ar@{}[ld]^<<<<{\stackrel{\rho}{\Leftarrow}}\\
    &\mathcal{A}(a_1,a_2)}
  \]
  that is, for any 1-cell $f\in \mathcal{A}(a_1,a_2)$ invertible 2-cells
  \[
  \Id_{a_2}\cdot f\stackrel{\lambda}{\Longrightarrow} f \qquad\text{and}\qquad
  f\cdot \Id_{a_1}\stackrel{\rho}{\Longrightarrow} f.
  \]
\end{itemize*}
The data given above is subject to two axioms that ensure that the various
associativity and unit constraints $\alpha,\rho,\lambda$ compose coherently:
\begin{itemize*}
\item[(A1)] The following pentagon commutes for any involved composable
  1-cells
  \[
  \xymatrix@R+3mm@C-15mm{&((k\cdot h)\cdot g)\cdot f\ar@{=>}[rr]^-{\alpha\cdot\id_f}\ar@{=>}[dl]_-{\alpha}&&(k\cdot (h\cdot g))\cdot f\ar@{=>}[dr]^-{\alpha}\\
    (k\cdot h)\cdot (g\cdot f)\ar@{=>}[drr]_-{\alpha}&&&&k\cdot ((h\cdot g)\cdot f)\ar@{=>}[dll]^-{\id_k\cdot\alpha}\\
    &&k\cdot (h\cdot (g\cdot f))}
  \]
\item[(A2)] The following triangle commutes for any involved composable
  1-cells
  \[
  \xymatrix{(g\cdot \Id)\cdot f\ar@{=>}[rr]^-\alpha\ar@{=>}[dr]_-{\rho\cdot \id_f}&&g\cdot(\Id \cdot f)\ar@{=>}[ld]^-{\id_g\cdot \lambda}\\
    &g\cdot f}
  \]
\end{itemize*}
\begin{exa}
  Any (strict) 2-category is a bicategory where the associativity and unit
  2-cells $\alpha, \rho,\lambda$ are all identities.
\end{exa}
\begin{exa}
  Any monoidal category $\mathcal{C}$ is a bicategory with one 0-cell. The
  1-cells of this bicategory are the objects of $\mathcal{C}$ and the 2-cells
  are the arrows of $\mathcal{C}$. The other data and axioms of the bicategory
  are induced by the monoidal structure on $\mathcal{C}$, in the obvious way.
\end{exa}
\begin{exa}
  There exists a bicategory $\mathbb{B}\text{i}\mathbb{M}\text{od}$, defined
  as follows:
  \begin{itemize*}
  \item[--] The 0-cells of $\mathbb{B}\text{i}\mathbb{M}\text{od}$ are rings
    with unit $A, B,$ $\ldots$
  \item[--] The category of $(A,B)$-bimodules defines the 1- and 2-cells of
    $\mathbb{B}\text{i}\mathbb{M}\text{od}$.
  \item[--] Horizontal composition, units, etc. are given by tensor product of
    bimodules.
  \end{itemize*}
\end{exa}
\subsection{Homomorphisms of classical bicategories}
There exist a number of notions of homomorphisms of bicategories, the one we
define here is that of weak homomorphisms in the literature.  Thus, for us a
homomorphism of bicategories $(F,f)\colon \mathbb{A}\To\mathbb{B}$ consists of
the following data and axioms:
\begin{itemize*}
\item[(D1)] A function $f\colon A\To B$ from the set of 0-cells of
  $\mathbb{A}$ to the set of 0-cells of $\mathbb{B}$.
\item[(D2)] For every ordered pair of 0-cells $(a_1,a_2)\in A^2$ a functor
  \[
  F_{a_1a_2}\colon \mathcal{A}(a_1,a_2)\To \mathcal{B}(f(a_1),f(a_2)).
  \]
\item[(D3a)] For every $(a_1,a_2,a_3)\in A^3$ natural isomorphisms relating
  horizontal compositions and $F$:
  \[
  \xymatrix@R+3mm@C+3mm{\mathcal{A}(a_1,a_2)\times\mathcal{A}(a_2,a_3)\ar[r]^-{\psi^{\mathbb{A}}}\ar[d]_{F\times F} &\mathcal{A}(a_1,a_3)\ar[d]^F\\
    \mathcal{B}(f(a_1),f(a_2))\times
    \mathcal{B}(f(a_2),f(a_3))\ar@{}[ru]|-{\Uparrow\theta}\ar[r]_-{\psi^{\mathbb{B}}}
    &\mathcal{B}(f(a_1),f(a_3))}
  \]
  that is, invertible 2-cells $F(h)\cdot
  F(g)\stackrel{\theta}{\Longrightarrow} F(h\cdot g)$ for any composable
  1-cells $h,g\in \mathbb{A}$.
\item[(D3b)] For every $a\in A$ natural isomorphisms relating units and $F$:
  \[
  \xymatrix@C+5mm{{*}\ar[r]^-{\psi_0^{\mathbb{A}}}\ar@{=}[d] &\mathcal{A}(a,a)\ar[d]^F\\
    {*}\ar@{}[ru]|{\Uparrow\theta_0}\ar[r]_-{\psi_0^{\mathbb{B}}}
    &\mathcal{B}(f(a),f(a))}
  \]
  that is, invertible 2-cells
  $\Id^{\mathbb{B}}_{f(a)}\stackrel{\theta_0}{\Longrightarrow}F(\Id^{\mathbb{A}}_a)$
  for any $a\in A$.
\end{itemize*}
The data described above is subject to axioms that ensure that $F$ is coherent
with the various associativity- and unit-constraints:
\begin{itemize*}
\item[(A1)] For every composable 1-cells $k,h,g\in\mathbb{A}$, the following
  hexagon of invertible 2-cells commutes
  \[
  \xymatrix{&F(k\cdot h)\cdot Fg\ar@{=>}[rd]^-{\theta}\\
    (Fk\cdot Fh)\cdot Fg\ar@{=>}[d]_-{\alpha^{\mathbb{B}}}\ar@{=>}[ur]^-{\theta\cdot\id}&&F((k\cdot h)\cdot g)\ar@{=>}^-{F\alpha^{\mathbb{A}}}[d]\\
    Fk\cdot (Fh\cdot Fg)\ar@{=>}[dr]_-{\id\cdot\theta}&& F(k\cdot (h\cdot g))\\
    & Fk\cdot F(h\cdot g)\ar@{=>}[ur]_-{\theta}}
  \]
\item[(A2)] For any 1-cell $g\in \mathcal{A}(a,a')$ the following diagrams of
  invertible 2-cells commute:
  \[
  \xymatrix@C+3mm{Fg\cdot \Id^{\mathbb{B}}_{fa}\ar@{=>}[r]^-{\id\cdot\theta_0}\ar@{=>}[d]_{\rho^{\mathbb{B}}} &Fg\cdot F(\Id^{\mathbb{A}}_a)\ar@{=>}[d]^\theta &\Id^{\mathbb{B}}_{fa'}\cdot Fg\ar@{=>}[r]^-{\theta_0\cdot\id} \ar@{=>}[d]_{\lambda^{\mathbb{B}}}& F(\Id^{\mathbb{A}}_{a'})\cdot Fg\ar@{=>}[d]^\theta\\
    Fg &F(g\cdot
    \Id^{\mathbb{A}}_a)\ar@{=>}[l]^-{F(\rho^{\mathbb{A}})}&Fg&F(\Id^{\mathbb{A}}_{a'}\cdot
    g)\ar@{=>}[l]^-{F(\lambda^{\mathbb{A}})}}
  \]
\end{itemize*}
Classical bicategories and their homomorphisms form a category that we denote
by $\biCat$.
\subsection{Unbiased bicategories}
As we mentioned in the introductory part of Subsection \ref{section:bicat}, it
is more natural to compare dendroidal bicategories (the lower degree terms of
the dendroidal set $\wCat^2$) with the category of unbiased bicategories and
their homomorphisms, notions that were defined by Tom Leinster in
\cite{leinster}. We will briefly discuss them here, the resulting category of
unbiased bicategories will be denoted by $\overline{u\biCat}$. Since the
categories $\overline{u\biCat}$ and $\biCat$ are equivalent, it is justified
to compare unbiased bicategories instead of the classical ones with dendroidal
bicategories.

The idea of unbiased bicategories comes from the observation that the
definition of bicategories is ``biased'' towards a binary horizontal
composition of 1-cells and a chosen associator between the two different ways
to compose horizontally three 1-cells. One can eliminate this bias by
considering a definition which resembles operads, as follows:
\begin{itemize*}
\item[(a)] for every $n\in \N$ give a horizontal composition of (composable)
  $n$-tuples of 1-cells;
\item[(b)] relate the $n$-ary compositions for various $n$-s by some given
  2-cells (the associators);
\item[(c)] the associators should be coherent, thus they have to satisfy some
  obvious relations;
\item[(d)] take care of the unit 1-cells.
\end{itemize*}
When one tries to work out the details of the points given above, one notices
that step (b) can be fulfilled in two ways, depending on the preferred
``operadic'' approach one takes: the $\circ_i$-approach or the general
$\gamma$-approach. These definitions are equivalent (the two resulting
categories of unbiased bicategories are isomorphic). Leinster in his
definition takes the second approach, we will take here the first one:

An unbiased bicategory $\mathbb{A}$ consists of the following data:
\begin{itemize*}
\item[(D1)] a set $A$;
\item[(D2)] for every $(a_1,a_2)\in A^2$ a category $\mathcal{A}(a_1,a_2)$;
\item[(D3)] for every integer $n\ge 0$ and every sequence of objects
  $(a_1,a_2,\ldots,a_{n+1})\in A^{n+1}$ an associated functor of $n$-ary
  composition
  \[
  \xymatrix@1{\mathcal{A}(a_1,a_2)\times\mathcal{A}(a_2,a_3)\times\cdots\times\mathcal{A}(a_{n},a_{n+1})\ar[r]^-{\Psi}
    &\mathcal{A}(a_1,a_{n+1})},
  \]
  we usually denote the $n$-fold horizontal composition of 1-cells by
  \[
  (g_1\cdot g_2\cdot\ldots\cdot g_n):=\Psi(g_1,\ldots,g_n);
  \]
\item[(D4)] for all $n,m,i\in\N$ such that $1\le i\le n$, $n\ne0$ and
  composable sequences of 1-cells
  \[
  (h_1, h_2,\ldots, h_n) \textnormal{ and } (g_1,g_2,\ldots,g_m)
  \]
  such that $(g_1\cdot g_2\cdot\ldots\cdot g_m)=h_i$, natural invertible
  2-cells
  \[
  \xymatrix@1{(h_1\cdot h_2\cdot\ldots\cdot h_n)\ar@{=>}[r]^-\phi&
    (h_1\cdot\ldots\cdot h_{i-1}\cdot g_1\cdot g_2\cdot\ldots\cdot g_m\cdot
    h_{i+1}\cdot\ldots \cdot h_n)};
  \]
\item[(D5)] for every 1-cell $g$ an invertible 2-cell
  $\xymatrix@1{g\ar@{=>}[r]^-\iota &(g)}$.
\end{itemize*}
This data has to satisfy some obvious axioms, ensuring coherence of
compositions and units.

\subsection{Homomorphisms of unbiased bicategories}
Suppose that $\mathbb{A}$ and $\mathbb{B}$ are unbiased bicategories. A
homomorphism $\xymatrix@1{\mathbb{A}\ar[r]^-{(F,f)}&\mathbb{B}}$ of unbiased
bicategories consists of the following data and axioms:
\begin{itemize*}
\item[(D1)] a function $f\colon A\To B$ between the 0-cells of $\mathbb{A}$
  and $\mathbb{B}$;
\item[(D2)] for every ordered pair of 0-cells $(a_1,a_2)\in A^2$ a functor
  \[
  F_{a_1a_2}\colon \mathcal{A}(a_1,a_2)\To \mathcal{B}(f(a_1),f(a_2));
  \]
\item[(D3)] for every $n\in\N$ and every ordered sequence of 0-cells of
  $\mathbb{A}$, $(a_1,\ldots,a_n)\in A^n$ natural isomorphisms
  \begin{equation}\label{ubicathom}
    \xymatrix@R+3mm@C+5mm{\mathcal{A}(a_1,a_2)\times\cdots\times\mathcal{A}(a_{n-1},a_n)\ar[r]^-{\psi^{\mathbb{A}}}\ar[d]_{F^n} &\mathcal{A}(a_1,a_n)\ar[d]^F\\
      \mathcal{B}(f(a_1),f(a_2))\times\cdots\times \mathcal{B}(f(a_{n-1}),f(a_n))\ar@{}[ru]|-{\Uparrow\theta}\ar[r]_-{\psi^{\mathbb{B}}} &\mathcal{B}(f(a_1),f(a_n))}
  \end{equation}
\end{itemize*}
This data again is subject to some coherence axioms, ensuring the
compatibility of $F$ with the various associativity- and unit constraints of
the involved bicategories.
\begin{rem}
  The category of unbiased bicategories defined above is denoted by
  $\overline{u\biCat}$. It has a full subcategory ${u\biCat}$, whose objects
  are those unbiased bicategories for which the unit 2-cells of (D5) are all
  identities. We call them unbiased bicategories with strict unit, but note
  that this terminology is misleading since there is a chain of fully faithful
  embeddings of equivalent categories
  \[
  \biCat\subseteq u\biCat\subseteq \overline{u\biCat}.
  \]
\end{rem}

\noindent{\bf Acknowledgements.} I would like to thank Ieke Moerdijk and Ittay
Weiss for their support, suggestions and helpful comments on earlier versions
of this paper. This work was possible with the financial support of the Sectoral Operational Program for
Human Resources Development 2007-2013, co-financed by the European Social Fund, within
the project POSDRU 89/1.5/S/60189 with the title ``Postdoctoral Programs for Sustainable
Development in a Knowledge Based Society''.
\bibliographystyle{gtart}
\bibliography{/media/minden/Work/Math/myreferences/myreferences}
\end{document}